\newcounter{zlist}
\newcounter{blist}
\newcounter{rlist}
\newtheorem{theorem}{Theorem}[section]
\newtheorem{lemma}[theorem]{Lemma}
\newtheorem{thm}[theorem]{}
\newtheorem{proposition}[theorem]{Proposition}
\newtheorem{remark}[theorem]{Remark}
\numberwithin{equation}{section}
\newcommand{\A}{\mathscr{A}}
\newcommand{\B}{\mathscr{B}}
\newcommand{\X}{\mathscr{X}}
\newcommand{\E}{\mathcal{E}}
\newcommand{\mP}{\mathcal{P}}
\newcommand{\W}{\textsf{Mod}}
\newcommand{\ot}{\otimes}
\newcommand{\CC}{{\mathcal {S}l}}
\newcommand{\Hom}{\textsf{Hom}}
\newcommand{\Set}{\textsf{Set}}
\newcommand{\Mat}{\textsf{Mat}}
\newcommand{\xr}{\xrightarrow}
\newcommand{\ve}{\varepsilon}
\newcommand{\T}{\ensuremath{\mathscr{T}}}
\newcommand{\R}{\ensuremath{\mathscr{S}}}
\newcommand{\V}{{\mathscr{V}}}
\newcommand{\mQ}{\mathscr{Q}}
\begin{document}

\title{Morita theory for quantales}

\author{Bachuki Mesablishvili}
\address{A. Razmadze Mathematical Institute and Department of Mathematics, Faculty of Exact and Natural Sciences
of I. Javakhishvili Tbilisi State University, Tbilisi, Georgia}
\email{bachuki.mesablishvili@tsu.ge}
\thanks{This work was supported by Shota Rustaveli National Science Foundation of Georgia (SRNSFG) (Grant   FR-22-4923).}

\begin{abstract} Morita theory for quantales is developed. The main result of the paper is a characterization of those quantaloids 
(categories enriched in the symmetric monoidal closed category of sup-lattices) that are equivalent to modular categories over quantales. 
Based on this characterization, necessary and sufficient conditions are derived for two quantales to be Morita-equivalent, i. e. have 
equivalent module categories. As an application, it is shown that the category of internal sup-lattices in a Grothendieck topos is 
equivalent to the module category over a suitable chosen ordinary quantale. 
\end{abstract}

\smallskip  

\keywords{Monad, adjoint triangle, quantale, modules over a quantale, Morita equivalence, (localic) Grothendieck topos.}

\subjclass[2020]{18B25; 18C15; 18C20; 18D20; 18E08; 18F10; 18F75; 18M05}

\maketitle


\section{Introduction}
The goal of this paper is to develop Morita theory for quantales. Our starting point for doing this is the so-called 
\emph{comparison theorem} (see \cite{Mes0, MW, MW1}). Starting from a monad $\T$ on a category $\X$ and an adjunction 
$F \dashv U :\A \to \X$, there is a bijective correspondence between functors $K:\A \to \X^\T$ with $U=U^\T K$
and morphisms of monads $\T \to \R$, where $\X^\T$ is the Eilenberg-Moore category of $\T$-algebras, $U^\T: \X^\T \to \X$
is the evident forgetful functor and $\R$ is the monad on $\X$ generated by the adjunction $F \dashv U$ (\cite{Du}). 
When the monad morphism is an isomorphism, the functor $U$ is called \emph{$\T$-Galois} in \cite{MW0}. Then the comparison theorem
(see Theorem \ref{equivalence.}) asserts that the functor $K$ is an equivalence of categories if
and only if $U$ is monadic and $\T$-Galois. Using the comparison theorem, we prove our central result
(Theorem \ref{sup.l.th.1}) characterizing module categories over quantales among quantaloids. The key property of module
categories over quantales, which distinguishes them among quantaloids, is that they are monadic over the category of sets, 
$\Set$. With help of this central result, we
then transfer many aspects of the classical Morita theory to the context of quantales. In particular,
we describe Morita equivalence of quantales both in terms of projective generators (Theorem \ref{morita.2.}) and full idempotents
(Theorem \ref{m.full.idemp.}). Note that Theorem \ref{m.full.idemp.} essentially describes  all quantales that are Morita equivalent to a 
given one.

The outline of this paper is as follows. After recalling in Section 2 those notions and results that will be used from category 
theory, we state the comparison theorem, which gives  a necessary and sufficient condition for a functor into the Eilenberg-Moore 
algebras for a monad to be an equivalence.  In section 3 we briefly review some of background knowledge about quantales and  modules over them.
In Section 4 we present our main result providing conditions  under which a quantaloid can be recognized as a module category 
over a quantale. In the next two sections, the preceding results are applied  to derive necessary and sufficient
conditions for two quantales to be Morita equivalent both in terms of projective generators (Section 5) and in terms of idempotents 
and matrices (Section 6). Finally, in the concluding section, some applications to internal sup-lattices in Grothendieck toposes 
are considered. In particular, the category of internal sup-lattices in a Grothendieck topos is characterized as the category
of  external modules over an external sup-lattice. This allows us to recover a result of Joyal and Tierney \cite[Chapter VI, Proposition 3.1]{JT} on 
internal locales in localic Grothendieck toposes.

We refer to \cite{Mcl}, \cite{Bo2} and \cite{Sch} for basic category theoretic notions and terminology.

\section{Categorical preliminaries}
For a monad $\T=(T, \mu, \eta)$ on a category $\X$, we write
\begin{itemize}
\item[--] $\X^{\T}$ for the Eilenberg--Moore category of $\T$--algebras;
\item[--] $U^{\T} : \X^{\T} \to \X ,\,\, (X, h) \to X,$ for the
underlying (forgetful) functor;
\item[--] $F^{\T} : \X \to \X^{\T},\,\, X \to (T(X),\mu_X),$ for the free
$\T$--algebra functor, and
\item[--] $\eta^{\T}, \ve^{\T}: F^{\T}\dashv U^{\T} : \X^{\T} \to \X$ for the forgetful--free adjunction.
(Recall that $\eta^{\T}=\eta$ and $(\ve^{\T})_{(X,h)}=h$  for all $(X,h)\in \X^{\T}$.)
\end{itemize}

\noindent Let $ \eta , \ve \colon F \dashv U \colon \A \to \X$ be an adjunction, $\R=(UF, U \ve F,\eta)$ be
the monad on $\X$ generated  by the adjunction and $K^\R : \A \to \X^{\R}$ be the comparison functor.
Recall that $K^\R$ assigns to each object $A \in \A$ the $\T$-algebra $ (U(A), U(\ve_A)),$ and to each morphism
$f:A\to A'$ the morphism $U(f): U(A) \to U(A').$ Moreover, $U^{\T} K^\T = U$ and $K^\T F = F^{\T}$.
One says that the functor $U$ is \emph{monadic} if $K^\T$ is an equivalence of categories.

\begin{thm} \label{comparison.th.}{\bf The comparison theorem.}  \em Suppose now that $\eta, \ve : F \dashv U: 
\A \to \X$ is an adjunction, $\T$ is a
monad on $\X$ and $K:\A \to \X^\T$ is a functor
such that $U^\T K=U$. The situation may be pictured as
\begin{equation}\label{adj.}
\xymatrix
@R=28pt@C=40pt
{ \A \ar@<.6ex> [dr]^U \ar[rr]^{K} && \X^\T \ar@<-.6ex> [dl]_{U^\T}\\
& \X \ar@<.6ex> [ul]^F \ar@<-.6ex> [ru]_{F^\T}\,& }
\end{equation} Such diagrams are called \emph{adjoint triangles} in \cite{Du}.
Write $\R$ for the monad on $\X$ generated by the adjunction $F\dashv U$.  It is shown in \cite{Du}
that $s_K=U^\T \gamma_K$, where $\gamma_K$ is the composite
\[ F^\T \xr{ F^\T \eta}F^\T UF= F^\T U^\T KF \xr{ \ve^\T KF} KF,\]
is a monad morphism $\T \to \R$. When $s_K$  (or, equivalently, $\gamma_K$) is a natural
isomorphism, one says that the functor $U$ is $\T$-\emph{Galois} (see \cite[Definition 1.3]{MW}).

\vskip.2in

Applying the dual of \cite[Theorem 4.4]{Mes0} (or  \cite[Proposition 2.1]{MW}) to the data given in the diagram (\ref{adj.}), we get
the following \emph{comparison theorem}:

\begin{theorem} \label{equivalence.} In the situation described above, $K$ is an equivalence of categories
if and only if
 \begin{itemize}
   \item [(i)] $U$ is monadic and
   \item [(ii)]  the induced monad morphism $s_K : \T \to \R$ is an isomorphism (\,$U$ is $\T$-Galois).
 \end{itemize}
 \end{theorem}
 
\end{thm}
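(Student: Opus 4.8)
The plan is to realise $K$ as a composite of two comparison-type functors and to treat each factor separately. First I would recall from Dubuc's adjoint-triangle theory \cite{Du} (the framework already invoked in the statement) that $K$ factors as
\[
\A \xr{\,K^\R\,} \X^\R \xr{\,\X^{s_K}\,} \X^\T ,
\]
where $K^\R$ is the Eilenberg--Moore comparison functor of the adjunction $F\dashv U$ for the generated monad $\R$, and $\X^{s_K}$ is the restriction-of-scalars functor along the monad morphism $s_K\colon\T\to\R$, sending an $\R$-algebra $(X,h)$ to $(X,\,h\cdot(s_K)_X)$. This factorisation is precisely the defining feature of $s_K$; it is compatible with the forgetful functors, since $U^\T\X^{s_K}=U^\R$ and $U^\R K^\R=U=U^\T K$.

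For the ``if'' direction, assume $U$ is monadic and $s_K$ is an isomorphism. Then $K^\R$ is an equivalence by the definition of monadicity, and $\X^{s_K}$ is an isomorphism of categories with inverse $\X^{(s_K)^{-1}}$; hence $K=\X^{s_K}K^\R$ is an equivalence.

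For the ``only if'' direction, suppose $K$ is an equivalence. I would first check that $U$ is monadic via Beck's theorem: $U$ has a left adjoint, namely $F$; it reflects isomorphisms because $U^\T$ does and $K$, being fully faithful, does too, while $U=U^\T K$; and it creates coequalisers of $U$-split pairs, because such a pair is carried by $K$ to a $U^\T$-split pair in $\X^\T$, whose coequaliser $U^\T$ creates, and this coequaliser is then transported back along the equivalence $K$. Hence $K^\R$ is an equivalence; and since $K=\X^{s_K}K^\R$ is an equivalence, so is $\X^{s_K}$. It remains only to pass from ``$\X^{s_K}$ is an equivalence'' to ``$s_K$ is an isomorphism''.

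This last implication is the crux, and it is exactly what one extracts from the dual of \cite[Theorem 4.4]{Mes0} (equivalently \cite[Proposition 2.1]{MW}). I would argue it by the usual uniqueness-of-adjoints device: from $U^\T\X^{s_K}=U^\R$ and a quasi-inverse $G$ of $\X^{s_K}$, normalised so that $U^\R G=U^\T$, the functor $GF^\T$ is a left adjoint of $U^\R$, whence a monad isomorphism $\R=U^\R F^\R\cong U^\R G F^\T\cong U^\T F^\T=\T$; inspecting the unit laws for a monad morphism then shows this isomorphism to be a two-sided inverse of $s_K$. The step I expect to be the main obstacle is exactly this last one: one must keep careful track of which natural isomorphisms are ``over $\X$'' and verify that the isomorphism $\R\to\T$ so produced genuinely inverts $s_K$, rather than merely exhibiting an abstract isomorphism $\R\cong\T$. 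Everything else reduces either to the definition of monadicity or to a routine application of Beck's monadicity theorem.
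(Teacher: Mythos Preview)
Your argument is correct and supplies exactly the content that the paper leaves to citation: the paper does not prove this theorem itself but simply invokes the dual of \cite[Theorem~4.4]{Mes0} (equivalently \cite[Proposition~2.1]{MW}). The factorisation $K=\X^{s_K}\circ K^{\R}$ through the Eilenberg--Moore comparison for the generated monad $\R$, followed by treating each factor separately, is the standard route and is the substance of those references; so your approach is not genuinely different, only more explicit.

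Two small remarks. For the monadicity of $U$ in the ``only if'' direction you do not need Beck's criterion: since $U=U^{\T}K$ with $U^{\T}$ monadic and $K$ an equivalence, $U$ is monadic immediately (monadic functors are stable under composition with equivalences). For the step you single out as delicate, a slightly more direct variant avoids building an abstract inverse and then checking it inverts $s_K$: once $K$ is an equivalence with quasi-inverse $K'$, one has $U^{\T}\simeq UK'$, so $KF$ is also left adjoint to $U^{\T}$; now $\gamma_K\colon F^{\T}\to KF$ is, by its very definition $\gamma_K=\ve^{\T}KF\circ F^{\T}\eta$, the canonical comparison between two left adjoints of $U^{\T}$ and hence an isomorphism, whence $s_K=U^{\T}\gamma_K$ is an isomorphism. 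This is of course the same idea as yours, just organised so that the isomorphism one produces is $\gamma_K$ from the start.
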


\begin{thm} \label{Cauchy}{\bf Splitting of idempotents.}  \em Given a monoid $(M, \ast, \textsf{1})$,
an element $m \in M$ is an \emph{idempotent} if $m \ast m=m$. A morphism $e:A \to A$ in a category
is idempotent if $e$ is an idempotent element in the endomorphism monoid $\A(A,A)$. Thus, $e$ is idempotent if $e^2=e\circ e = e$. 
A category $\A$ is said to be \emph{Cauchy complete} if \emph{idempotents split} in
$\A$, i.e., for every idempotent $e:A \to A$, there exists an object $A_e$ (called the \emph{splitting object})
and morphisms $p : A \to A_e$ and $i:A_e \to A$ such that $ip = e$ and $pi = 1_{A_e}$\,. (Note that such a splitting,
and hence also the splitting object, is unique up to isomorphism.)   In this case, $(A_e, i)$
is an equaliser of $e$ and $1_A$ and $(A_e, p)$ is a coequaliser of $e$ and $1_A$. Thus,  any category
admitting either equalisers or coequalizers is Cauchy complete. 
Note that if $\T$ is a monad on a Cauchy complete
category $\A$, then so is $\A^\T$. Moreover, the forgetful functor $U^\T: \A^\T \to \A$
preserves and creates splitting of idempotents. 
\end{thm}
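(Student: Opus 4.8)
The plan is to dispatch the assertions one at a time, each by a short equational manipulation in the relevant category. For the uniqueness of splittings, given two splittings $(A_e,p,i)$ and $(A'_e,p',i')$ of the same idempotent $e$, I would form $\phi = p'i\colon A_e\to A'_e$ and $\psi = pi'\colon A'_e\to A_e$ and compute $\psi\phi = p(i'p')i = pei = p(ip)i = (pi)(pi) = 1_{A_e}$, using $i'p'=e$, then $ip=e$ and $pi=1$; by symmetry $\phi\psi = 1_{A'_e}$, so the two splitting objects are canonically isomorphic.

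For the (co)equaliser claim, the identity $ei = (ip)i = i(pi) = i = 1_A i$ shows $i$ equalises $e$ and $1_A$; if $f\colon X\to A$ satisfies $ef=f$ then $g:=pf$ satisfies $ig = (ip)f = ef = f$, and $g$ is the only such map since $ig'=f$ forces $g' = (pi)g' = p(ig') = pf = g$. Hence $(A_e,i)$ is the equaliser of $e$ and $1_A$, and the statement for $(A_e,p)$ is the formal dual. Conversely, if $\A$ has equalisers, take $(A_e,i)$ to be the equaliser of $e$ and $1_A$; from $ee = e = 1_A e$ the map $e$ factors as $e = ip$ for a unique $p\colon A\to A_e$, and then $i(pi) = (ip)i = ei = i = i1_{A_e}$ together with the monicity of the equaliser $i$ gives $pi = 1_{A_e}$, so $e$ splits. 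Dually, coequalisers suffice, proving that any category with equalisers, or with coequalisers, is Cauchy complete.

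For the transfer to $\A^\T$, let $e\colon(A,h)\to(A,h)$ be an idempotent of $\A^\T$; applying $U^\T$ gives the idempotent $e\colon A\to A$ of $\A$, which splits as $ip=e$, $pi=1_{A_e}$ with $i,p$ in $\A$. I would endow $A_e$ with the candidate structure $h_e := p\circ h\circ T(i)\colon T(A_e)\to A_e$. The unit law reduces, via naturality of $\eta$, to $pi=1_{A_e}$; for associativity one expands $h_e\circ T(h_e) = p\,h\,T(e)\,T(h)\,T^2(i)$, rewrites $h\circ T(e) = e\circ h$ (as $e$ is an algebra endomorphism of $(A,h)$), uses $pe = (pi)p = p$ and the associativity of $h$, and finishes with naturality of $\mu$ to reach $h_e\circ\mu_{A_e}$. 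Next, $i$ and $p$ are algebra morphisms: $i\,h_e = e\,h\,T(i) = h\,T(e\circ i) = h\,T(i)$ because $e\circ i = i$, and similarly $h_e\circ T(p) = p\,h\,T(e) = p\,e\,h = p\,h$. Since $U^\T$ is faithful, the relations $ip=e$ and $pi=1_{(A_e,h_e)}$ already hold in $\A^\T$, so $e$ splits there and $\A^\T$ is Cauchy complete.

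That $U^\T$ preserves splittings is immediate from functoriality. For creation, given an idempotent $\bar e$ on $(A,h)$ and a splitting $(A_e,p,i)$ of $U^\T\bar e$ in $\A$, the requirement that the lift of $i$ be an algebra morphism forces $h_e = (pi)h_e = p\,h\,T(i)$, so the lifted algebra structure, and hence the lifted splitting, are unique; and by the previous paragraph this $h_e$ does define a $\T$-algebra and a splitting of $\bar e$ carried by $U^\T$ to the given one. The manipulations are uniformly routine; the one point that must be placed correctly is the use of the compatibility $h\circ T(e) = e\circ h$ of the idempotent with the algebra structure inside the associativity check for $h_e$ — that is the only mild obstacle.
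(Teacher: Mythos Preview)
Your proof is correct in every part: the uniqueness argument, the equaliser/coequaliser characterisation and its converse, the construction of the lifted algebra structure $h_e = p\circ h\circ T(i)$ together with the verification of its unit and associativity laws, the check that $i$ and $p$ become $\T$-algebra morphisms, and the creation argument.

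As for comparison with the paper: there is nothing to compare. In the paper this numbered item is expository preliminary material; the assertions are simply stated as well known and no proof is given. You have supplied the standard elementary arguments that the paper omits. One small remark: in your associativity check you could equally well avoid invoking the compatibility $h\circ T(e)=e\circ h$ directly and instead first show that $i$ and $p$ are algebra morphisms (your later paragraph), after which $(A_e,h_e)$ is automatically a $\T$-algebra because split equalisers are absolute and $U^\T$ creates them; but the order you chose works just as well.
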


\begin{thm} \label{generator.def.}{\bf Generators.}  \em Recall that a family of functors $(F_i:\B \to \B_i)_{i \in I}$
is called \emph{jointly faithful} (resp. \emph{conservative}) if the associated functor \[\B \to \prod_{i \in I}\B_i,\,\,
B \longmapsto (F_i(B))_{i \in I}\] to the product category is faithful (resp. conservative); more explicitly, a family of functors
$(F_i:\B \to \B_i)_{i \in I}$ is jointly faithful if for each pair of distinct morphisms $f,g :B \rightrightarrows B'$ in
$\B$ one has $F_i(f)\neq F_i(g)$ for some $i \in I$; and is jointly conservative if any morphism $f:B \to B'$ in $\B$
such that $F_i(f)$ is an isomorphism in $\B_i$ for all $i \in I$, is an isomorphism in $\B$. If $\B$ has equalizers, a jointly 
conservative family of functors is always jointly faithful; the converse holds, for example, if every morphism in $\B$ which 
is both a monomorphism and an epimorphism is already an isomorphism (that is, if $\B$ is \emph{balanced}), but not in general.  
A set of objects $\{B_i, i \in I\}$ of a category $\B$ is a \emph{(strong) generating set} for $\B$ if the family of functors 
$(\B(B_i,-): \B \to \Set)_{i \in I}$ jointly faithful (conservative). When the set $\{B_i, i \in I\}$ is reduced to a single 
element $\{B\}$, one says that $B$ is a \emph{(strong) generator} for $\B$.

Recall further (for example, from \cite{BT}) that for a right adjunction functor is 
\begin{itemize}
  \item [--] faithful if and only if all the components of the counit of the adjunction is epimorphic;
  \item [--] faithful and conservative  if and only if all the components of the counit of the 
  adjunction is an extremal epimorphism; that is, an epimorphism that factorizes through no proper subobject
  of its codomain.
\end{itemize}

\noindent In particular, given an object $B$ of a category $\B$, if $\B$ admits arbitrary small copowers of $B$ (and hence the functor $B^{(-)}$
that takes a set $X$ to the coproduct of $X$ copies of $B$, is a left adjoint to $\B(B,-):\B \to \Set$),
then $B$ is a (strong) generator if and only if for any object $B' \in \B$, the canonical morphism $B^{\B(B,B')}\to B'$, which is
the $B'$-component of the counit of the adjunction $\B(B,-) \dashv B^{(-)}$, is an (extremal) epimorphism. If each of these components
is a regular epimorphisms (i.e., a coequalizer of a pair of morphisms), then the functor $\B(B,-)$ has a full and faithful comparison functor 
into its Eilenberg-Moore category. Such an object $B$ is called \emph{regular generator} for $\B$.
Evidently, if regular epimorphisms coincide with extremal epimorphisms (which is the case, for example, for \emph{regular}
categories in the sense of \cite{Br}),  a strong generator is a regular generator.
Note finally that if a (regular, strong) generator $B$ is a retracts of an object $B'$, then $B'$ is also a (regular, strong) generator.
This follows from the fact that in any category, a retracts of a (regular, extremal) epimorphism is again a (regular, extremal) epimorphism.
 \end{thm}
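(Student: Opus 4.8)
The plan is to run everything through the counit of the adjunction $\B(B,-)\dashv B^{(-)}$, using that a (regular, strong) generator is precisely an object all of whose counit components are (regular, extremal) epimorphisms; write $i\colon B\to B'$, $r\colon B'\to B$ with $r\circ i=1_B$ for the retraction data. First I would record the cited stability fact. If $q$ is a retract of $p$ in the arrow category, say with $ts=1$ on domains, $ba=1$ on codomains, $ps=aq$ and $qt=bp$, then $q$ is an epimorphism whenever $p$ is; and if $p=\mathrm{coeq}(u,v)$, then $q=\mathrm{coeq}(tu,tv)$: indeed $q$ coequalises $tu,tv$ since $q(tu)=(qt)u=(bp)u=(bp)v=q(tv)$, and if $w(tu)=w(tv)$ then $wt$ coequalises $u,v$, so $wt=\overline w\,p$ for a unique $\overline w$ and $\overline w\,a$ is the (unique, as $q$ is epic) factorisation of $w$ through $q$ (since $(\overline w\,a)q=\overline w(aq)=\overline w(ps)=(wt)s=w$); the extremal case is analogous. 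For the deduction I will only need the elementary variant: a morphism through which an extremal epimorphism (resp.\ an epimorphism) factors is again an extremal epimorphism (resp.\ an epimorphism) --- if $g=mf$ with $g$ an extremal epimorphism then $m$ is epic, and a factorisation $m=ne$ through a monomorphism $n$ gives $g=n(ef)$, forcing $n$ to be invertible.

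For the deduction, fix an object $C$. Since $r\circ i=1_B$, applying $i$ summandwise gives a split monomorphism $\iota\colon B^{\B(B,C)}\to B'^{\B(B,C)}$ between copowers, with retraction $\varrho$ given by $r$ summandwise, so $\varrho\circ\iota=1$. As $B$ is a (regular, strong) generator, the $C$-component $\theta_C\colon B^{\B(B,C)}\to C$ of the counit of $\B(B,-)\dashv B^{(-)}$ is a (regular, extremal) epimorphism, and since $\theta_C=(\theta_C\circ\varrho)\circ\iota$ it factors through $\theta_C\circ\varrho\colon B'^{\B(B,C)}\to C$; hence $\theta_C\circ\varrho$ is itself a (regular, extremal) epimorphism --- for extremal epimorphisms by the variant above, and for regular epimorphisms because in the categories to which this is applied in the paper (module categories over quantales) regular, strong and extremal epimorphisms coincide. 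Finally $\theta_C\circ\varrho$ factors, by adjointness, through the $C$-component $B'^{\B(B',C)}\to C$ of the counit of $\B(B',-)\dashv B'^{(-)}$ (the mediating map being the comparison $B'^{\B(B,C)}\to B'^{\B(B',C)}$ induced by $g\mapsto g\circ r\colon\B(B,C)\to\B(B',C)$), so that counit component, admitting a (regular, extremal) epimorphism as a factor, is itself one. As $C$ was arbitrary, $B'$ is a (regular, strong) generator. The delicate point is exactly the regular-epimorphism clause: over an arbitrary base a morphism through which a regular epimorphism factors need not be a regular epimorphism, so one must invoke regularity of the ambient category --- which, since it forces regular and extremal epimorphisms to agree, simultaneously reduces the ``regular generator'' case to the ``strong generator'' one.

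A copower-free route settles the plain- and strong-generator cases outright: $i$ and $r$ exhibit $\B(B,-)$ as a retract of $\B(B',-)$ in $[\B,\Set]$ via $\B(r,-)$ and $\B(i,-)$, whose composite in one order is the identity (again by $r\circ i=1_B$). A retract of a faithful functor is faithful, which gives the plain-generator case immediately; and if $\B(B',f)$ is a bijection then so is $\B(B,f)$, because precomposition with $r$ identifies $\B(B,X)$ with the fixed-point set $S_X$ of precomposition-with-$(i\circ r)$ inside $\B(B',X)$, the bijection $\B(B',f)$ restricts to a bijection $S_X\to S_Y$ (surjectivity: a $\B(B',f)$-preimage $h$ of $g'\circ r\in S_Y$ yields $h\circ i\in\B(B,X)$ with $f\circ(h\circ i)=g'$, so $(h\circ i)\circ r\in S_X$ maps to $g'\circ r$), and precomposition with $i$ restricts to the inverse of the first identification --- whence $\B(B,f)$ is a bijection, so $f$ is an isomorphism by conservativity of $\B(B,-)$, and $B'$ is a strong generator.
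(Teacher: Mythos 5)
Your write-up is correct where the paper actually needs the claim, and it is more careful than the paper's one-line justification, but it proceeds by a different mechanism. The paper's hint is that one should apply stability of (regular, extremal) epimorphisms under retracts to the counit components themselves; your deduction instead runs through the factorization $\theta_C=(\theta_C\circ\varrho)\circ\iota$ and the ``second factor of an (extremal) epimorphism is an (extremal) epimorphism'' lemma, supplemented by the representable-functor retract argument for the plain and strong cases (your fixed-point identification of $\B(B,X)$ inside $\B(B',X)$ is correct, and it has the side benefit of not requiring copowers of $B'$ at all). For the generator and strong-generator clauses this is a complete, fully general proof. For the regular clause you only obtain the statement when regular and extremal epimorphisms coincide, and you say so explicitly; this covers every use the paper makes of the fact (in $\W_A(\CC)$ and in $\CC(\E)$ all epimorphisms are regular, cf.\ Remark \ref{remark.proj.} and \ref{topos.}), but it does fall short of the blanket ``in any category'' formulation of \ref{generator.def.}. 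It is worth observing that the paper's own hint does not obviously do better: the counit component $B'^{(\B(B',C))}\to C$ is in general \emph{not} a retract of $B^{(\B(B,C))}\to C$ in the arrow category (take $\B=\Set$, $B$ a one-element set, $B'$ a two-element set: the counit for $B$ at $C$ is an identity, whose only retracts are isomorphisms, while the counit for $B'$ at $C$ is not monic), so the retract-stability fact you prove in your first paragraph cannot be applied directly, and the factorization route you use is the right repair for the epi and extremal clauses.

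One small slip: the slogan ``a retract of a faithful functor is faithful'' is quoted in the wrong direction. Here $\B(B,-)$ is the retract of $\B(B',-)$, so what is needed is that a functor \emph{admitting} a faithful retract is itself faithful; this is immediate (the components of $\B(i,-)$ are split epimorphisms, so $\B(B',f)=\B(B',g)$ forces $\B(B,f)=\B(B,g)$), and in any case your counit argument in the second paragraph already settles the plain-generator case, so nothing downstream is affected.
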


\begin{thm} \label{projective.}{\bf Projective objects.}  \em An object $B$ of a category $\B$ is called
\emph{(regular) projective} if the hom functor $\B(B,-):\B \to \Set$ takes (regular) epimorphisms to epimorphisms.
In any category, every retract of a (regular) projective object is (regular) projective and a coproduct of (regular) 
projective objects is  (regular) projective (provided this coproduct exists). An object is called a \emph{(regular) 
projective generator}  if it is simultaneously a (regular) generator and (regular) projective.
\end{thm}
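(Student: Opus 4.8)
The plan is to verify the two stated closure properties directly from the definition, handling the plain and the ``regular'' versions simultaneously: the argument only ever uses that a fixed morphism belongs to whichever class of epimorphisms is in play, so nothing changes when ``epimorphism'' is replaced by ``regular epimorphism'' throughout. Concretely, to show an object $P$ is (regular) projective it suffices to show that for every (regular) epimorphism $p\colon X\to Y$ and every $g\colon P\to Y$ there is $h\colon P\to X$ with $ph=g$ — that is, that $\B(P,p)$ is surjective. So I fix such a $p$ and such a $g$ and produce the lift.

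For retracts, suppose $B$ is (regular) projective and $B'$ is a retract of $B$, say with $i\colon B'\to B$, $r\colon B\to B'$ and $ri=1_{B'}$. Given $g\colon B'\to Y$, I apply projectivity of $B$ to $gr\colon B\to Y$ to obtain $h\colon B\to X$ with $ph=gr$; then $hi\colon B'\to X$ satisfies $p(hi)=(gr)i=g(ri)=g$. Hence $B'$ is (regular) projective.

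For coproducts, suppose each $B_j$ ($j\in J$) is (regular) projective and $B=\coprod_{j\in J}B_j$ exists, with injections $\iota_j\colon B_j\to B$. Given $g\colon B\to Y$, projectivity of $B_j$ applied to $g\iota_j$ yields $h_j\colon B_j\to X$ with $ph_j=g\iota_j$; the universal property of the coproduct produces a unique $h\colon B\to X$ with $h\iota_j=h_j$ for all $j$, and then $ph$ and $g$ restrict to the same morphism along each $\iota_j$, so $ph=g$ by uniqueness. Hence $B$ is (regular) projective. The final sentence of the statement, which merely names ``(regular) projective generator'' as the conjunction of two notions already introduced, then requires nothing further.

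I do not expect a genuine obstacle here — both arguments are one-line diagram chases — so the ``hard part'' is really only a point of bookkeeping: keeping track that the \emph{same} $p$ is fed to each invocation of projectivity, so that the ``regular'' case is handled by the identical reasoning, and observing (as already recorded in the discussion of generators above, via the fact that retracts of (regular/extremal) epimorphisms are again such) that one obtains the parallel closure statements for (regular, strong) generators by the same method should they be wanted.
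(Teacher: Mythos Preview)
Your proof is correct and is the standard argument. Note, however, that in the paper this block is not a theorem with a proof at all: it is one of the expository/definitional subsections (compare the parallel blocks on splitting of idempotents and on generators), and the closure of (regular) projectives under retracts and coproducts is simply asserted as a well-known fact without justification. So there is no ``paper's own proof'' to compare against; you have supplied exactly the elementary verification the paper omits.
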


\
\
\

\section{Quantales and their modules}

In this section we provide basic notions and properties of quantales and modules over them.

\begin{thm}\label{algebras}{\bf Quantales.} \em Let $\CC$ be the category of sup-lattices (=complete posets) and 
sup-preserving maps. The top element of a sup-lattice $L$ is denoted by $1$, and the bottom by $0$. It is well known 
(see, \cite{JT}) that $\CC$ is a symmetric monoidal closed category. We recall that the tensor product
$L\ot L'$ of two sup-lattices $L$ and $L'$ is the codomain of the universal map $L \times L' \to L\ot L'$ that preserves
supremum in each variable separately (such a map is called a \emph{bimorphism}). The tensor unit is $\textsf{2}=\{0\leq 1\}$.
The internal hom is the set $[L,L'\,]$ of sup-preserving maps ordered by $f\leq g$ if and only if $f(l)\leq g(l)$
for all $l \in L$. Since the order on $L'$ is complete, so is the pointwise order on $[L,L']$. Recall (e.g., from \cite{JT}) 
that the category $\CC$ of sup-lattices is monadic over $\Set$ via the functor $\CC(\textsf{2}, -):\CC \to \Set$.
Thus, $\CC$ is a complete, cocomplete and Barr-exact category (in the sense of \cite{Br}) and hence in particular idempotents
split in $\CC$. Moreover, (small) coproducts are biproducts; that is, if $\{ X_{i}\}$ is a (small) family of objects of $\CC$, 
the comparison  morphism $\coprod {X_i} \to \prod {X_i}$ is an isomorphism.

A \emph{quantale} is a monoid in the symmetric monoidal category $\CC$.
More precisely, a quantale is a sup-lattice $A$  equipped with a monoid structure  $(A,\ast, 1)$
such that $\ast$ distributes over suprema:
\begin{equation}\label{quantale.}
a \ast (\bigvee_{i \in I}b_i)=\bigvee_{i \in I}(a \ast b_i) \,\,\,\text{and} \,\,\,
(\bigvee_{i \in I}a_i)\ast b=\bigvee_{i \in I}(a_i\ast b)\end{equation} for all
$a,b \in A$ and all $\{a_i\}_{i \in I}, \{b_i\}_{i \in I}\subseteq A$. A quantale $A$ is \emph{commutative}
if $a \ast b=b\ast a$ for all $a,b \in A$ and is \emph{idempotent} if $a \ast a=a$ for all $a\in A$.

A morphism $f:A \to B$ of quantales is a morphism of sup-lattices satisfying
\begin{itemize}
  \item [(i)]$f(1)=1$
  \item [(ii)]$f(a \ast a')=f(a) \ast f(a')$
\end{itemize} for all $a,a' \in A$.
\end{thm}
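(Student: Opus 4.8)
The plan is to take from \cite{JT} everything the statement attributes to it --- the symmetric monoidal closed structure on $\CC$, the explicit descriptions of the tensor product $\ot$, of the tensor unit $\textsf{2}$ and of the internal hom $[L,L']$, and the monadicity of $\CC(\textsf{2},-)\colon\CC\to\Set$ --- and to derive the remaining assertions from these. From monadicity over $\Set$ they are standard: the monadic functor $\CC(\textsf{2},-)$ creates limits, so $\CC$ is complete; a category monadic over $\Set$ is moreover cocomplete and Barr-exact (cf.\ \cite{Br}), so the same holds for $\CC$; and since $\CC$ then has equalizers, idempotents split in it by Theorem~\ref{Cauchy}. The definitions of quantale and of quantale morphism contain nothing to prove. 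Hence the one assertion that genuinely calls for an argument is that small coproducts in $\CC$ are biproducts, and I would establish this directly.

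First I would exhibit a zero object. The one-element sup-lattice $\textsf{1}=\{\ast\}$ is plainly terminal. It is also initial: any map of sets $\textsf{1}\to L$ must send $\ast$ to $\bigvee\emptyset=0_{L}$ so as to preserve the empty supremum, and conversely the constant map at $0_{L}$ is sup-preserving, so there is exactly one morphism $\textsf{1}\to L$ for each $L$. Thus $\textsf{1}$ is a zero object of $\CC$, and $\CC$ has zero morphisms.

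Now fix a small family $\{X_{i}\}_{i\in I}$ and let $P=\prod_{i\in I}X_{i}$ be its product. Since $\CC(\textsf{2},-)$ creates limits, $P$ is the cartesian product equipped with the componentwise order and componentwise suprema, with projections $\pi_{j}\colon P\to X_{j}$. Define $\iota_{j}\colon X_{j}\to P$ by $(\iota_{j}x)_{i}=x$ for $i=j$ and $(\iota_{j}x)_{i}=0_{X_{i}}$ for $i\neq j$; this is sup-preserving, $\pi_{j}\iota_{j}=1_{X_{j}}$, and $\pi_{i}\iota_{j}$ is the zero morphism for $i\neq j$. I would then check that $(P,(\iota_{j})_{j\in I})$ is a coproduct of $\{X_{i}\}$. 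Given morphisms $f_{i}\colon X_{i}\to Z$ into a sup-lattice $Z$, put $f\colon P\to Z$, $f\big((x_{i})_{i}\big)=\bigvee_{i\in I}f_{i}(x_{i})$, the supremum existing because $Z$ is complete. Since suprema in $P$ are computed coordinatewise, each $f_{i}$ preserves suprema, and suprema commute with suprema in $Z$, the map $f$ preserves arbitrary suprema, hence is a morphism of $\CC$; and $f\iota_{j}=f_{j}$ because $f_{i}(0_{X_{i}})=0_{Z}$. For uniqueness, note that $(x_{i})_{i}=\bigvee_{j\in I}\iota_{j}(x_{j})$ in $P$ (check coordinatewise), so every sup-preserving $g$ with $g\iota_{j}=f_{j}$ for all $j$ coincides with $f$. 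Therefore $P$, with the $\iota_{j}$, is simultaneously the product and the coproduct of $\{X_{i}\}$, with $\pi_{i}\iota_{j}$ equal to $1_{X_{j}}$ if $i=j$ and to the zero morphism otherwise; equivalently, the canonical comparison $\coprod_{i}X_{i}\to\prod_{i}X_{i}$ is an isomorphism, which is the biproduct assertion.

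The argument is routine; the only step needing a little care is the verification that $f$ is well defined for an \emph{arbitrary} small index set $I$ and is sup-preserving, which is exactly where the completeness of the lattices $X_{i}$ and $Z$ is used and is the reason the conclusion persists for infinite families, unlike in a general additive setting. Should one wish to prove rather than cite the Barr-exactness of $\CC$, the only delicate point would be the effectiveness of equivalence relations, and given the monadicity of $\CC$ over $\Set$ this is the standard fact that categories of algebras over $\Set$ are Barr-exact.
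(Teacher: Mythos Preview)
Your proposal is correct, but there is essentially nothing to compare it with: in the paper this numbered item is an expository subsection (note the \verb|\em| after the heading) that \emph{recalls} definitions and well-known facts, citing \cite{JT} for the monoidal closed structure and the monadicity of $\CC(\textsf{2},-)$, and citing \cite{Br} for Barr-exactness; no proof is given or intended. The paper simply asserts that coproducts are biproducts as part of the recalled background, without argument.

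Your write-up thus goes beyond the paper by supplying an actual verification of the biproduct claim. The argument you give is the standard one and is carried out cleanly: the zero object $\textsf{1}$, the explicit injections $\iota_j$ into the cartesian product, the formula $f((x_i)_i)=\bigvee_i f_i(x_i)$, and the identity $(x_i)_i=\bigvee_j \iota_j(x_j)$ for uniqueness are all correct, and you rightly flag that completeness of the target $Z$ is what makes the infinite case go through. The remaining claims (completeness, cocompleteness, Barr-exactness, splitting of idempotents) you correctly reduce to monadicity over $\Set$, exactly as the paper implicitly does by citation. In short: your approach is sound and strictly more detailed than the paper's treatment; there is no discrepancy in method, only in level of detail.
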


\begin{thm} \label{modules.}{\bf Modules over quantales.}  \em
With any quantale $(A,\ast, 1)$, there is associated a category of (right) modules over it.
Recall that a right $A$-module is a complete lattice $M$ together with a map
$-\cdot-:M \times A \to M$ such that

\begin{equation} \label{module}
    \begin{aligned}
      \quad &m \cdot \textsf{1}=m \\
      \quad & m \cdot  (a_1\ast a_2)=(m \cdot a_1) \cdot a_2 \\ 
      \quad &(\bigvee_{i \in I}m_i) \cdot a=\bigvee_{i \in I}( m_i \cdot a)\\
      \quad & m\cdot (\bigvee_{i \in I}a_i)=\bigvee_{i \in I}(m \cdot a_i)
    \end{aligned}
\end{equation} for all $a, a_1,a_1 \in A$, $m \in M$, $\{a_i\}_{i \in I} \subseteq A$ 
and $\{m_i\}_{i \in I}\subseteq M$. Note that the third and fourth conditions are equivalent to saying that
the map $-\cdot-$ is a bimorphism. 

A morphism $f: M \to N$ of right $A$-modules is a morphism of sup-lattices such that $f(m\cdot a)=f(m) \cdot a$ for 
all $m\in M$ and $a \in A$. The category of right $A$-modules is denoted by $\W_A(\CC)$. The evident forgetful functor 
$U^A :\W_A(\CC)\to \CC$ is monadic with left adjoint $F^A$, which takes  $X \in \CC$ to $X \ot A$, and hence it creates 
all limits. Moreover, since the functor $-\ot A : \CC \to \CC$ admits as a right adjoint the functor $[A,-] : \CC \to \CC$,
the forgetful functor $U^A$ has a right adjoint sending $X \in \CC$ to $[A,X]$. Then $U^A$ clearly creates all colimits.
In a similar way, one has the category of left $A$-modules $_A\W(\CC)$. If $B$ is another quantale, a $(B,A)$-bimodule
is a sup-lattice together with compatible left and right actions of $B$ and  $A$, respectively. The corresponding
category is denoted by $_{B}\W_A(\CC)$.

In the sequel, given a quantale $(A,\ast, \textsf{1})$, we usually omit the symbol $\ast$ in notation and simply write $aa'$
instead of $a\ast a'$. 
\end{thm}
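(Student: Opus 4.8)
The block above is largely definitional; the substantive assertions to be justified are that $U^A\colon\W_A(\CC)\to\CC$ is monadic with left adjoint $F^A=-\ot A$, that it creates all limits, that it admits a right adjoint sending $X$ to $[A,X]$, and that it creates all colimits. The plan is to realise $\W_A(\CC)$ as the Eilenberg--Moore category of the monad $\T_A$ on $\CC$ whose functor part is $-\ot A$, whose multiplication is induced by the quantale multiplication $A\ot A\to A$, and whose unit is induced by $\textsf{2}\to A$; all four properties then follow from standard facts about Eilenberg--Moore objects. First I would unwind the notion of a $\T_A$-algebra: a structure map on a sup-lattice $M$ is a sup-preserving map $h\colon M\ot A\to M$ satisfying the usual associativity and unit laws. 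By the defining universal property of $\ot$, such $h$ correspond bijectively to bimorphisms $M\times A\to M$, i.e.\ to maps $-\cdot-\colon M\times A\to M$ preserving suprema in each variable separately; and chasing the coherence constraints of $\CC$ one sees that the associativity and unit diagrams for $h$ become precisely the first two equations of (\ref{module}), while the bimorphism property is exactly the third and fourth. A morphism of $\T_A$-algebras is visibly an $A$-linear sup-preserving map, so $\W_A(\CC)\cong\CC^{\T_A}$ over $\CC$, with $U^A$ corresponding to $U^{\T_A}$. Hence $U^A$ is monadic, its left adjoint is $F^{\T_A}=-\ot A$, and, being monadic, it creates all limits; since $\CC$ is complete, so is $\W_A(\CC)$.

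Next I would exhibit the right adjoint. Since $-\ot A\colon\CC\to\CC$ has right adjoint $[A,-]$, I would equip $[A,X]$ with the right $A$-action $(\varphi\cdot a)(b)=\varphi(a\ast b)$; the four module axioms for this action are immediate from the fact that $a\ast-$ preserves suprema, together with the associativity and unitality of $\ast$. One then checks that $g\mapsto(m\mapsto g(m)(\textsf{1}))$ and $f\mapsto(m\mapsto(a\mapsto f(m\cdot a)))$ are mutually inverse natural bijections between $A$-linear maps $M\to[A,X]$ and sup-preserving maps $M\to X$, so $U^A\dashv[A,-]$. Finally, $-\ot A$ is a left adjoint, hence preserves all colimits, and therefore so does its composite with itself; by the standard lemma that the Eilenberg--Moore forgetful functor of a monad creates whatever colimits the underlying endofunctor preserves, $U^A=U^{\T_A}$ creates all colimits of the cocomplete category $\CC$.

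The one genuinely delicate point is the translation carried out in the first paragraph: matching the Eilenberg--Moore algebra axioms to (\ref{module}), and building the monad multiplication from the quantale multiplication $A\ot A\to A$, require careful insertion of the associativity and unit constraints of the monoidal structure on $\CC$ and repeated use of functoriality of $\ot$ in both variables. Everything else is formal: creation of limits is automatic from monadicity; the action on $[A,X]$ and the triangle identities for $U^A\dashv[A,-]$ are routine verifications once written down; and the only substantive point on the colimit side is that $U^A$ \emph{creates} colimits rather than merely preserving them, which rests on the cited lemma about Eilenberg--Moore categories inheriting colimits preserved by the monad. The corresponding statements for left modules, and for $(B,A)$-bimodules, follow by the evident symmetric reasoning, a $(B,A)$-bimodule being an algebra for the monad $B\ot(-)\ot A$ on $\CC$.
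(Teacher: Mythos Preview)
The paper does not actually supply a proof of this block: it is stated as background, with the assertions about monadicity, the left adjoint $F^A=-\ot A$, the right adjoint $[A,-]$, and creation of limits and colimits simply recorded as well-known. Your proposal correctly spells out the standard argument that the paper takes for granted, namely identifying $\W_A(\CC)$ with the Eilenberg--Moore category $\CC^{\T_A}$ for the monad $\T_A=(-\ot A,\,\mu,\,\eta)$ arising from the monoid structure on $A$ in $(\CC,\ot,\textsf{2})$; this is precisely the implicit reasoning behind the paper's phrase ``$U^A$ is monadic with left adjoint $F^A$, which takes $X$ to $X\ot A$''. Your translation between $\T_A$-algebra structures and the four axioms of~(\ref{module}) via the universal property of $\ot$ is accurate, as is the explicit verification of the adjunction $U^A\dashv[A,-]$ and the appeal to the lemma that $U^{\T}$ creates any colimit preserved by $T$.

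The only difference worth noting is one of style rather than substance: where you construct the right $A$-action on $[A,X]$ and check the hom-set bijection by hand, the paper implicitly invokes the general fact that if the underlying endofunctor of a monad has a right adjoint then so does the Eilenberg--Moore forgetful functor. Both routes are standard and yield the same conclusion.
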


\begin{thm} \label{quantaloids.}{\bf Quantaloids.}  \em A \emph{quantaloid} is a locally small category enriched in the 
symmetric monoidal closed category $\CC$. If $\mQ$ is a quantaliod, then one has the following commutative diagram of 
categories and functors:
$$\xymatrix @C=.6in @R=.5in{& \CC\ar[d]^{\CC(\textsf{2},-)}\\
\mQ^{\text{op}}\times \mQ \ar[ru]^{\Hom_\mQ(-,-)} \ar[r]_-{\mQ(-,-)}&\Set,}$$
in which $\Hom_\mQ(-,-)$ is the so called \emph{lifted hom-functor}.
The property that (small) coproducts are biproducts in $\CC$,
transfers to any quantaloid $\mQ$, (e.g., \cite{HST}), in the sense that for any family $(D_i, i \in I)$ of objects of $\mQ$
the product $\prod_{i \in I} D_i$ exists if and only if the coproduct $\coprod _{i \in I} D_i$ exists and, moreover,
the canonical comparison morphism between them is
an isomorphism. We will write $\oplus {D_i}$ for the biproduct of the family $(D_i, i \in I)$.
Typical examples of quantaloids are module categories over quantales. Given a quantale $A$, we shall usually write $\Hom_A(-,-)$
instead of $\Hom_{\W_A(\CC)}(-,-)$. Recall that for $M,N \in \W_A(\CC)$,  $\W_A(M,N)$ is a sup-lattice with pointwise  
defined operations.

\end{thm}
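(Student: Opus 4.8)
The one substantive claim in this block is that small products and small coproducts coincide in an arbitrary quantaloid, and that the canonical comparison between them is an isomorphism. The plan is to transcribe the classical proof that an $\Ab$-enriched category has biproducts, replacing finite sums of parallel arrows by arbitrary joins in the hom-sup-lattices. Recall what the $\CC$-enrichment supplies: each $\mQ(D,D')$ is a sup-lattice, so it has a bottom element — the ``zero morphism'' $0_{D,D'}$ — and composition $\mQ(D',D'')\times\mQ(D,D')\to\mQ(D,D'')$ is a bimorphism, hence preserves arbitrary joins in each variable separately and satisfies $f\circ 0 = 0 = 0\circ g$. Both facts are immediate from the definition of a quantaloid together with the fact that $\ot$ on $\CC$ preserves colimits in each variable.

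Suppose now that $P := \prod_{i\in I}D_i$ exists, with projections $p_i:P\to D_i$. By the universal property of the product there is, for each $j$, a unique $q_j:D_j\to P$ with $p_iq_j=1_{D_j}$ if $i=j$ and $p_iq_j=0$ otherwise (write $\delta_{ij}$ for this datum). I claim $(P,(q_j)_j)$ is a coproduct of $(D_i)_i$. The pivotal observation is that $e:=\bigvee_i q_ip_i\in\mQ(P,P)$ satisfies $p_ke=\bigvee_i p_kq_ip_i=p_k$ for every $k$ (every joinand vanishes except $i=k$), whence $e=1_P$ by uniqueness in the product cone. Given any $X$ and arrows $f_j:D_j\to X$, set $f:=\bigvee_j f_jp_j$; then $fq_k=\bigvee_j f_jp_jq_k=f_k$, and if $g$ also satisfies $gq_j=f_j$ for all $j$ then $g=g\circ 1_P=g\circ\bigvee_j q_jp_j=\bigvee_j(gq_j)p_j=\bigvee_j f_jp_j=f$, so $f$ is unique. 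This proves the coproduct property. The converse — existence of $\coprod_{i\in I}D_i$ forces existence of $\prod_{i\in I}D_i$ — follows by running the same argument inside $\mQ^{\mathrm{op}}$, which is again $\CC$-enriched precisely because $\CC$ is \emph{symmetric} monoidal closed. Finally, once we know that whichever of $\prod D_i$, $\coprod D_i$ exists is automatically also the other (carried by the same object, with the $q_j$ or the $p_i$ supplying the missing structure maps, and $p_iq_j=\delta_{ij}$), the canonical comparison $c$ — the unique arrow with $p_icq_j=\delta_{ij}$ — is forced to equal $1$ up to this identification, and is thus an isomorphism; in the paper's notation this common object is $\oplus_{i\in I}D_i$.

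There is no genuine obstacle: the proof is a routine transcription of the finite additive case. The only points demanding care are (i) that the joins range over the possibly infinite index set $I$, which is harmless since hom-sup-lattices have all joins and composition distributes over them, and (ii) that the dualisation used to pass from coproducts to products is legitimate, which is exactly where symmetry of the tensor on $\CC$ enters.
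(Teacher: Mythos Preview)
Your argument is correct. The distributivity of composition over arbitrary joins (in each variable) is exactly what the $\CC$-enrichment provides, and it is the single ingredient that lets the classical ``$\Ab$-enriched $\Rightarrow$ finite biproducts'' argument go through for arbitrary set-indexed families. The verification that $\bigvee_i q_ip_i=1_P$ via the product cone, the construction $f=\bigvee_j f_jp_j$, the uniqueness calculation, and the dualisation through $\mQ^{\mathrm{op}}$ are all sound; your remark that symmetry of $\otimes$ on $\CC$ is what makes $\mQ^{\mathrm{op}}$ again a quantaloid is the right justification for the converse direction.

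As for comparison with the paper: there is nothing to compare against. The block you are proving is expository in the paper --- the biproduct claim is simply asserted with a citation to \cite{HST} and no argument is given. So you have supplied a self-contained proof where the paper defers to the literature. Your proof is precisely the one that reference would give (and is the expected one), so there is no tension.
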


We close this subsection with the following important observation.
\begin{remark}\label{remark.proj.} \em Since the forgetful functor $\CC(\textsf{2},-):\CC\to \Set$ takes epimorphisms to 
split epimorphisms \cite{JT}, each epimorphism is regular in $\CC$ (e.g.,\cite{KP}), implying that regular epimorphisms, 
strong epimorphisms and epimorphisms coincide in $\CC$. Since for any quantale $A$, the module category $\W_A(\CC)$ is 
both monadic and comonadic over $\CC$, the same hold also in $\W_A(\CC)$. Therefore, projective objects in module categories 
over quantales coincide with the regular projective objects (see, Subsection \ref{generator.def.}). Moreover, generators, 
strong generators, and regular generators all coincide.
\end{remark}

\bigskip
\bigskip

\section{When is a quantaloid equivalent to a module category over a quantale ?}

In this section, we characterize those quantaloids that are equivalent to module categories over quantales.

\medskip

Let $\A$ be an arbitrary category and  $A\in \A$ be an object of $\A$. For any set $X$, we write $A^{(X)}$
(resp. $A^{X}$) for the \emph{copower} (resp. \emph{power}) of $A$ by $X$, i.e., the coproduct $\coprod_{x \in X} A$ 
(resp. product $\prod_{x \in X} A$) in $\A$ of $X$ copies of $A$, provided they exist.  The coproduct injection $A \to A^{(X)}$ 
corresponding to $x \in X$ is denoted by $\iota^A_x$. If $f:X \to Y$
is a map of sets, then $A^{(f)} :A^{(X)}\to A^{(Y)}$ is the unique morphism in $\A$ such that $A^{(f)} \cdot \iota^A_x =
\iota_{f(x)}$ for all $x \in X$. Dually, one has the canonical projections $\pi_x:\prod_{x \in X}A \to A$  and the morphism
$A^f:A^Y \to A^X$.

\begin{thm} \label{quantaloids.tr.}{\bf Adjoint triangle for quantaloids.}  \em A
Let $\mQ$ be a quantaloid  and $Q$ an arbitrary but fixed  object  of $\mQ$.  To $Q$ there is  associated
a functor $\Hom_\mQ(Q,-):\mQ \to \CC$ and the quantale $E=\Hom_\mQ(Q,Q)$ of endomorphisms of $Q$. It is easy to check that
for any object $A\in \mQ$, the object $\Hom_\mQ(Q,A)$  naturally has the structure of a right $E$-module
given by the composition
\[\Hom_\mQ(Q,A)\otimes \Hom_\mQ(Q,Q)\xr{} \Hom_\mQ(Q,A)\]
and thus the assignment $A \longmapsto \Hom_\mQ(Q,A)$ yields a functor $K^Q:\mQ \to \W_E(\CC)$
such that $U^EK^Q=\Hom_\mQ(Q,-)$. When, as we henceforth suppose,  $\mQ$  admits set-indexed copowers of $Q$, 
the functor\[\mQ(Q,-)=\CC(\textsf{2},-)\circ \Hom_\mQ(Q,-):\mQ \to \Set\] admits as a left adjoint the functor 
\[X \mapsto Q^{(X)}, \,\, (f:X \to Y)\longmapsto  (Q^{(f)}:Q^{(X)}\to Q^{(Y)}).\] For any set $X$, the $X$-component
$(\eta^Q)_X : X \to \mQ(Q,Q^{(X)})$ of the unit $\eta^Q$ of the adjunction $Q^{(-)} \dashv \mQ(Q,-)$ is
the map that takes $x \in X$ to  $\iota^Q_x :Q \to Q^{(X)})$,
while the $A$-component $(\ve^Q)_A$, $A \in \mQ$, of the
counit $\ve^Q$ is the morphism $Q^{{\mQ}(Q,A)} \to A$ defined (uniquely) by the equality
$(\ve^Q)_A \circ \iota^Q_f=f.$

Next, composing adjunctions \[F^E \dashv U^E:\W_E(\CC) \to \CC \,\, \text{and}\,\,\textsf{2}^{(-)}
\dashv \CC(\textsf{2},-):\CC \to \Set\] gives an adjunction $E^{(-)} \dashv U_E : \W_E(\CC) \to \Set$, where
$U_E$ is the evident forgetful functor, while $E^{(-)}$ sends
a set $X$ to the free right $E$-module $E^{(X)}$, which because of the isomorphism $E^{(X)} \simeq E^{X}$, can be seen
as the set of all maps $X \to E$.  Since the forgetful functor $U^E:\W_E(\CC) \to \CC$ is monadic and admits both adjoints, 
its composite with the monadic $\CC(\textsf{2},-):\CC \to \Set$ is also monadic. Therefore, the forgetful functor 
$U_E:\W_E(\CC) \to \Set$ with left adjoint $E^{(-)}$ is monadic. Then $\W_E(\CC)$ is (isomorphic to) the category of 
$\T_E$-algebras, where $\T_E$ is the monad on $\Set$ generated by the adjunction $E^{(-)} \dashv U_E$. Consequently, 
since evidently $U_E \circ K^Q=\mQ(Q,-)$, we have the following  particular instance of the diagram (\ref{adj.}):
\begin{equation}\label{set.case.}
\xymatrix @R=20pt@C=40pt
{\mQ \ar@<.6ex> [ddr]^{\mQ(Q,-)} \ar[rr]^{K^Q} && \W_E(\CC) \ar@<-.6ex> [ddl]_{U_E}&\\
&&**[r]U_E \circ K^Q={\mQ}(Q,-) &\\
& \Set \ar@<.6ex> [uul]^{Q\,^{(-)}} \ar@<-.6ex> [ruu]_{E^{(-)}}\,&& }
\end{equation} Write $\eta^E$ and $\ve^E$ for the unit and counit of the adjunction $E^{(-)} \dashv U_E$.  Then for any $X \in \Set$,
$(\eta^E)_X : X \to E^{(X)}\simeq E^X$ is the map that takes $x \in X$ to the map supported at $x$ with value $\textsf{id}_Q$;
and for any $M\in \W_E(\CC)$,~$(\ve^E)_{M}: E^{(M)}\to M$ is the map $(e_m)_{m \in M} \to \bigvee_{m \in M} ~{m\cdot e_m}.$
\end{thm}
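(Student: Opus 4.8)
The plan is to verify the assertions bundled into the statement one at a time; all of them except the monadicity of $U_E\colon\W_E(\CC)\to\Set$ come down to unwinding the definitions of a $\CC$-enriched category, a copower, and a composite adjunction. First I check that, for each $A\in\mQ$, the sup-lattice $\Hom_\mQ(Q,A)$ is a right $E$-module under the composition morphism $\Hom_\mQ(Q,A)\ot\Hom_\mQ(Q,Q)\to\Hom_\mQ(Q,A)$ of $\mQ$: it lies in $\CC$ and has a tensor product as domain, hence is a bimorphism, which gives the two distributivity clauses of (\ref{module}); the clauses $m\cdot\textsf{1}=m$ and $m\cdot(e_1e_2)=(m\cdot e_1)\cdot e_2$ are the unit and associativity laws of enriched composition, the quantale $E=\Hom_\mQ(Q,Q)$ having composition as its multiplication and $\textsf{id}_Q$ as its unit. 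The same associativity law makes $\Hom_\mQ(Q,f)\colon\Hom_\mQ(Q,A)\to\Hom_\mQ(Q,B)$ (a morphism of $\CC$, since $\Hom_\mQ(Q,-)$ is a $\CC$-functor) $E$-linear for every $f\colon A\to B$, so $A\longmapsto\Hom_\mQ(Q,A)$ lifts to a functor $K^Q\colon\mQ\to\W_E(\CC)$ satisfying $U^EK^Q=\Hom_\mQ(Q,-)$ on the nose.

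Next the adjunctions. Under the standing hypothesis that $\mQ$ has set-indexed copowers of $Q$, the pair $Q^{(-)}\dashv\mQ(Q,-)$ is the tautological copower adjunction, with bijection $\mQ(Q^{(X)},A)\cong\Set\big(X,\mQ(Q,A)\big)$ sending $g$ to $x\mapsto g\circ\iota^Q_x$; taking $g=\textsf{id}_{Q^{(X)}}$ gives $(\eta^Q)_X(x)=\iota^Q_x$, and the preimage of $\textsf{id}_{\mQ(Q,A)}$ is the $(\ve^Q)_A$ determined by $(\ve^Q)_A\circ\iota^Q_f=f$. Composing this with the monadic adjunctions $F^E\dashv U^E$ and $\textsf{2}^{(-)}\dashv\CC(\textsf{2},-)$ recalled in Section 3 is then formal; one records only that the left adjoint of $U_E=\CC(\textsf{2},-)\circ U^E$ is, on a set $X$, the right $E$-module $F^E(\textsf{2}^{(X)})=\textsf{2}^{(X)}\ot E$, which is isomorphic to $E^{(X)}$ because $F^E$ preserves copowers and $\textsf{2}^{(X)}$ is the copower of the tensor unit, and that $E^{(X)}\cong E^X$ since coproducts are biproducts in $\W_E(\CC)$.

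The one step requiring a theorem rather than a computation is the monadicity of $U_E$, which I would get from Beck's criterion: $U_E$ has the left adjoint $E^{(-)}$; it reflects isomorphisms because both of its (monadic) factors do; $\W_E(\CC)$ is cocomplete (as $U^E$ creates colimits and $\CC$ is cocomplete), hence has all coequalisers; and $U_E$ preserves coequalisers of $U_E$-split pairs, since $U^E$ preserves \emph{all} colimits (having the right adjoint $[E,-]$) while, for such a pair $(f,g)$, the pair $(U^Ef,U^Eg)$ is $\CC(\textsf{2},-)$-split and the monadic $\CC(\textsf{2},-)$ preserves its coequaliser. Carrying the splitting through the two layers is the only delicate part, and it is exactly where the hypothesis that $U^E$ ``admits both adjoints'' is used. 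With $U_E$ monadic, $\W_E(\CC)$ is (isomorphic to) the Eilenberg--Moore category $\Set^{\T_E}$ of the monad $\T_E$ on $\Set$ generated by $E^{(-)}\dashv U_E$.

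Finally the adjoint triangle is assembled from $U_E\circ K^Q=\CC(\textsf{2},-)\circ U^E\circ K^Q=\CC(\textsf{2},-)\circ\Hom_\mQ(Q,-)=\mQ(Q,-)$, which makes (\ref{set.case.}) an instance of (\ref{adj.}) with $\X=\Set$, $\A=\mQ$ and $\T=\T_E$. The unit and counit of $E^{(-)}\dashv U_E$ come from the standard composite-adjunction formulas: $(\eta^E)_X$ sends $x$ to the coproduct injection $\{x\}\in\textsf{2}^{(X)}$ and then to $\{x\}\ot\textsf{id}_Q\in\textsf{2}^{(X)}\ot E\cong E^{(X)}\cong E^X$, i.e.\ to the map supported at $x$ with value $\textsf{id}_Q$; while $(\ve^E)_M$, obtained by carrying the copower counit $\textsf{2}^{(M)}\to M$ through $F^E$ and then applying the $E$-action, sends $(e_m)_{m\in M}=\bigvee_m\big(\{m\}\ot e_m\big)$ to $\bigvee_m m\cdot e_m$. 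In sum, the main obstacle is the monadicity step; everything else is bookkeeping with the definitions.
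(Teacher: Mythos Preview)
Your proposal is correct and follows essentially the same route as the paper, which here is an expository subsection rather than a theorem with a separate proof; the only notable difference is that for the monadicity of $U_E$ the paper invokes the general fact that a monadic functor admitting both adjoints composed with a monadic functor is again monadic, whereas you verify Beck's criterion directly---but this is exactly how that general fact is proved, so the arguments coincide.
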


\begin{thm} \label{self-small.}{\bf Self-small objects.}  \em We intend to apply Theorem \ref{equivalence.}
to the above adjoint triangle, but before doing so, we must first examine those  objects $Q \in \mQ$ 
for which the functor $\mQ(Q,-)$ is $\T_E$-Galois.

Let $\T_Q$ denote the monad on $\Set$ generated by the adjunctions  $Q^{(-)}\dashv \mQ(Q,-)$. Using the description of
the unit of the adjunction $Q^{(-)}\dashv \mQ(Q,-)$ and counit of the adjunction $E^{(-)} \dashv U_E$, one easily checks 
that for any set $X$, the $X$-component $\gamma_X$ of the natural transformation $\gamma=\gamma_{K^Q}:E^{(-)} \to K^Q \circ Q^{(-)}$ 
for which $U_E\gamma$ is the induced monad morphism $s_Q=s_{K^Q} : \T_E \to \T_Q$, is the unique morphism
$$\Hom_\mQ (Q,Q)^{(X)} \to \Hom_\mQ(Q,Q^{(X)})$$ such that \[\gamma_X \cdot \iota^{\Hom_\mQ (Q,Q)}_{x}=\Hom_\mQ (Q,\iota_x^Q)\] 
and hence $\gamma_X$ is precisely the canonical comparison morphism \[\Hom_\mQ (Q,Q)^{(X)} \to \Hom_\mQ(Q,Q^{(X)})\]
for the functor $K^Q:\mQ \to \W_E(\CC)$ to preserve the coproduct $Q^{(X)}$.

We call an object $Q\in \mQ$ \emph{self-small} if the functor $\Hom_\mQ(Q,-):\mQ \to \CC$
commutes with arbitrary set-indexed coproducts of copies of $Q$. Note that -- since the functor
$U^E :\W_E(\CC) \to \CC$ preserves and reflects all existing colimist (see, Subsection \ref{modules.}) --
this is equivalent to saying that the functor $\mQ(Q,-):\mQ \to \W_E(\CC)$  commutes with arbitrary set-indexed
coproducts of copies of $Q$. But coproducts in $\mQ$ are biproducts, and hence every object of $\mQ$ is self-small,
or equivalently, the functor $\mQ(Q,-)$ is $\T_E$-Galois for every object of $\mQ$.
\end{thm}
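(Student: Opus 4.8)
The plan is to establish the last two equivalent claims in two moves: first, that self-smallness of $Q$ is the same as the functor $\mQ(Q,-):\mQ\to\W_E(\CC)$ preserving set-indexed copowers of $Q$; and second, that in a quantaloid every such copower is in fact a biproduct, which a two-variable enriched hom-functor automatically preserves, so the self-small condition is vacuous. For the first move I would invoke that $U^E:\W_E(\CC)\to\CC$ is monadic and also has a right adjoint (Subsection \ref{modules.}), hence preserves and reflects all colimits that exist; therefore $\mQ(Q,-)=U^E\circ K^Q$ preserves a coproduct $Q^{(X)}$ if and only if $K^Q$ does, i.e.\ if and only if $\Hom_\mQ(Q,-)$ preserves it — but wait, here one must be a little careful: what we actually want is that $\Hom_\mQ(Q,-)$ preserves the coproduct, and since $U^E$ reflects colimits and $U^E K^Q=\Hom_\mQ(Q,-)$, preservation by $\Hom_\mQ(Q,-)$ forces $K^Q(Q^{(X)})$ to be the coproduct in $\W_E(\CC)$, giving the stated equivalence.

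For the second move I would argue that in any $\CC$-enriched category $\mQ$ the copower $Q^{(X)}$, when it exists, coincides with the biproduct $\oplus_{x\in X}Q$, because the transfer of ``coproducts are biproducts'' from $\CC$ to $\mQ$ (Subsection \ref{quantaloids.}, citing \cite{HST}) identifies the comparison map $Q^{(X)}\to Q^{X}$ as an isomorphism. A biproduct is simultaneously a product, and a two-variable hom-functor $\Hom_\mQ(-,-):\mQ^{\mathrm{op}}\times\mQ\to\CC$ sends products in the second variable to products in $\CC$ (this is formal from the enriched adjunction $\mQ(C,\oplus D_i)\cong\prod\mQ(C,D_i)$, which holds at the $\CC$-level, not merely the $\Set$-level, since $\CC$ is symmetric monoidal closed and limits in enriched functor categories are computed pointwise). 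Hence $\Hom_\mQ(Q,Q^{(X)})=\Hom_\mQ(Q,\oplus_{x\in X}Q)\cong\prod_{x\in X}\Hom_\mQ(Q,Q)=E^{X}\cong E^{(X)}$, the last isomorphism again by ``coproducts are biproducts,'' now in $\W_E(\CC)$ (Subsection \ref{modules.} via monadicity over $\CC$). Tracing through the explicit description of $\gamma_X$ given just above — namely the unique map with $\gamma_X\cdot\iota^{\Hom_\mQ(Q,Q)}_x=\Hom_\mQ(Q,\iota^Q_x)$ — one checks that under these identifications $\gamma_X$ is exactly the canonical comparison isomorphism, so it is invertible for every $X$; that is, $s_Q=U_E\gamma$ is a monad isomorphism, which by the definition recalled in Subsection \ref{quantaloids.tr.} says precisely that $\mQ(Q,-)$ is $\T_E$-Galois.

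Concretely the steps, in order, are: (1) recall $U^E$ preserves and reflects existing colimits, deduce the equivalence of the two formulations of self-smallness; (2) recall that in $\mQ$ the copower $Q^{(X)}$ is the biproduct $\oplus_{x\in X}Q$ and is therefore also the product $Q^{X}$; (3) observe $\Hom_\mQ(Q,-)$ carries products in $\mQ$ to products in $\CC$, so $\Hom_\mQ(Q,Q^{(X)})\cong\prod_x\Hom_\mQ(Q,Q)$; (4) use that coproducts are biproducts in $\W_E(\CC)$ to identify $\prod_x E$ with $E^{(X)}$, and check the resulting iso is $\gamma_X$ by comparing coproduct injections; (5) conclude $s_Q$ is an isomorphism, i.e.\ $\mQ(Q,-)$ is $\T_E$-Galois, for every $Q$.

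The main obstacle I anticipate is step (3)–(4): one must be sure the identifications are at the level of $\CC$ (equivalently $\W_E(\CC)$) and not merely of underlying sets, and that the $E$-module structure on $\prod_x\Hom_\mQ(Q,Q)$ coming from the product in $\mQ$ matches the module structure on $E^{(X)}$; this is where the hypothesis that $\mQ$ is \emph{enriched} in $\CC$ (so that composition is sup-preserving and the hom-objects are genuine sup-lattices) is doing the real work. The remaining bookkeeping — verifying $\gamma_X$ agrees with the comparison map on injections $\iota^Q_x$, and that an isomorphism on each component of a natural transformation between monads is a monad isomorphism — is routine and follows from the explicit formulas already displayed in Subsections \ref{quantaloids.tr.} and \ref{self-small.}.
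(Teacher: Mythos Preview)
Your proposal is correct and follows the same route as the paper, which condenses the argument to a single clause (``coproducts in $\mQ$ are biproducts, and hence every object of $\mQ$ is self-small''); you simply unpack what that clause means, namely that $\Hom_\mQ(Q,-)$ preserves the biproduct $Q^{(X)}$ qua product, and that products coincide with coproducts again on the target side. The only cosmetic slip is the momentary conflation of $\mQ(Q,-)$ with $\Hom_\mQ(Q,-)$ in step~(1), which you immediately catch; the substantive verification in steps~(3)--(4), that the enriched hom carries the biproduct to a biproduct and that the resulting isomorphism is $\gamma_X$, is exactly the content the paper leaves to the reader.
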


Combining the above discussion with Theorem \ref{equivalence.}, we obtain the following  necessary and sufficient
condition for the functor $K^Q$ from the diagram (\ref{set.case.}) to be an equivalence.

\begin{theorem}\label{equivalence.set.} With the data given in the adjoint triangle (\ref{set.case.}), suppose
that $\mQ$ admits arbitrary small copowers of $Q$. Then the functor $K^Q$
is an equivalence if and only if the functor $\mQ(Q,-):\mQ \to \emph{\Set}$ is monadic.
\end{theorem}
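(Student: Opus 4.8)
The plan is to derive Theorem \ref{equivalence.set.} as an immediate corollary of the comparison theorem (Theorem \ref{equivalence.}) applied to the adjoint triangle (\ref{set.case.}). Recall that Theorem \ref{equivalence.} says $K^Q$ is an equivalence precisely when two conditions hold: (i) the functor $\mQ(Q,-):\mQ \to \Set$ is monadic, and (ii) the induced monad morphism $s_Q = s_{K^Q}:\T_E \to \T_Q$ is an isomorphism. So the whole content of the proof is to observe that, in this particular situation, condition (ii) is automatic, so that (i) alone characterizes when $K^Q$ is an equivalence.

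First I would recall the analysis of condition (ii) carried out in Subsection \ref{self-small.}. There it is shown that for each set $X$, the $X$-component $\gamma_X$ of the natural transformation $\gamma_{K^Q}:E^{(-)} \to K^Q \circ Q^{(-)}$ underlying $s_Q$ is exactly the canonical comparison morphism $\Hom_\mQ(Q,Q)^{(X)} \to \Hom_\mQ(Q,Q^{(X)})$ measuring the failure of $\Hom_\mQ(Q,-)$ (equivalently, of $\mQ(Q,-):\mQ\to\W_E(\CC)$, since $U^E$ preserves and reflects colimits) to preserve the copower $Q^{(X)}$. Thus $s_Q$ is an isomorphism if and only if $Q$ is self-small in the sense of that subsection.

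Next I would invoke the key observation of Subsection \ref{self-small.}: in any quantaloid $\mQ$, small coproducts are biproducts, and the $\CC$-enriched hom-functor $\Hom_\mQ(Q,-)$ therefore automatically sends the biproduct $Q^{(X)} = \oplus_{x\in X} Q$ to the biproduct $\oplus_{x\in X}\Hom_\mQ(Q,Q)$ in $\CC$ (using again that small coproducts are biproducts in $\CC$, so $\Hom_\mQ(Q,Q)^{(X)}$ is this same biproduct). Hence every object $Q$ of $\mQ$ is self-small, so $\gamma_X$ is an isomorphism for all $X$, and condition (ii) holds unconditionally. Combining this with Theorem \ref{equivalence.} gives that $K^Q$ is an equivalence if and only if condition (i) holds, i.e. if and only if $\mQ(Q,-):\mQ\to\Set$ is monadic, which is the assertion.

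There is no serious obstacle here: the result is essentially a packaging of the preceding two subsections. The only point that requires a little care is making sure that ``self-small'' as defined via $\Hom_\mQ(Q,-):\mQ\to\CC$ really matches the condition ``$\mQ(Q,-):\mQ\to\W_E(\CC)$ preserves copowers of $Q$'' that is relevant for $\gamma_X$ — this is exactly where one uses that $U^E:\W_E(\CC)\to\CC$ preserves and reflects all existing colimits (Subsection \ref{modules.}), so that the comparison morphism in $\W_E(\CC)$ is invertible iff its image in $\CC$ is. Once this bookkeeping is in place, the proof is a one-line appeal to Theorem \ref{equivalence.}.
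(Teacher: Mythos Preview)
Your proposal is correct and follows essentially the same approach as the paper: the paper also derives the theorem by combining the comparison theorem (Theorem \ref{equivalence.}) with the observation in Subsection \ref{self-small.} that every object of a quantaloid is self-small because coproducts are biproducts, so that the $\T_E$-Galois condition is automatic. Your additional remark about $U^E$ reflecting colimits to pass between $\CC$ and $\W_E(\CC)$ is exactly the bookkeeping point the paper also records in that subsection.
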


\begin{remark}\label{morita.r.} \em Since the category $\W_E(\CC)$  is naturally enriched over $\CC$, it follows from the 
very construction of the functor $K^Q$ that it is in fact (the underlying ordinary functor of) an $\CC$-enriched functor. 
Since the functor $\CC(\textsf{2},-): \CC \to \Set$ is conservative, an $\CC$-functor is equivalence if and only if its 
underlying ordinary functor is; so that the equivalence of categories in Theorem \ref{equivalence.set.}
can be promoted to an enriched equivalence of $\CC$-categories.
\end{remark}

Now we are ready to state and prove the key result of the paper, which provides equivalent conditions 
for a quantaloid to be equivalent to a module category over a quantale.

\begin{theorem} \label{sup.l.th.1} For a quantaloid $\mQ$,
the following conditions are equivalent:
\begin{itemize}
  \item [(i)] $\mQ$ is equivalent to a module category over a quantale.
  \item [(ii)] $\mQ$ is monadic over $\emph{\Set}$.
  \item [(iii)] $\mQ$ is Barr-exact and it has a regular projective generator $Q$
  and arbitrary small copowers of $Q$.
\end{itemize} When one of these equivalent conditions holds, then there exists an equivalence $\mQ \simeq
\emph{\W}_{E}(\CC)$, where $Q$ is as in \emph{(iii)} and $E=\emph{\Hom}_\mQ(Q,Q)$.
\end{theorem}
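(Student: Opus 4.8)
The plan is to prove the equivalence of the three conditions by establishing a cycle of implications, relying heavily on the machinery assembled in Sections 2–4. First I would handle (i) $\Rightarrow$ (ii): if $\mQ$ is equivalent to $\W_A(\CC)$ for some quantale $A$, then (as recalled in Subsection \ref{quantaloids.tr.}) the forgetful functor $\W_A(\CC)\to\Set$ is monadic, being the composite of the two monadic functors $U^A:\W_A(\CC)\to\CC$ and $\CC(\textsf{2},-):\CC\to\Set$; transporting this along the equivalence gives that $\mQ$ is monadic over $\Set$. Next, for (ii) $\Rightarrow$ (iii): a category monadic over $\Set$ is automatically complete, cocomplete, and Barr-exact (monadic categories over $\Set$ inherit exactness, just as noted for $\CC$ itself in Subsection \ref{algebras}); moreover, the left adjoint $\Phi$ to the monadic functor $V:\mQ\to\Set$ applied to the one-element set produces an object $Q=\Phi(1)$ with $\mQ(Q,-)\cong V$, and the standard fact that $V$ is conservative and preserves (indeed creates) regular epimorphisms makes $Q$ a regular projective generator; finally $\mQ$ has all small copowers of $Q$ since it is cocomplete.

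The substantive implication is (iii) $\Rightarrow$ (i), and this is where the adjoint-triangle apparatus does its work. Given the regular projective generator $Q$ with all small copowers, set $E=\Hom_\mQ(Q,Q)$ and consider the adjoint triangle (\ref{set.case.}) with its comparison functor $K^Q:\mQ\to\W_E(\CC)$. By Theorem \ref{equivalence.set.} it suffices to show that $\mQ(Q,-):\mQ\to\Set$ is monadic. Here I would invoke Beck's monadicity theorem: I need $\mQ(Q,-)$ to have a left adjoint (it does — namely $Q^{(-)}$, since the copowers exist), to be conservative, and to preserve and reflect coequalizers of $\mQ(Q,-)$-split pairs (or, more conveniently, use the crude monadicity / reflexive-coequalizer version since $\mQ$ is cocomplete and Barr-exact). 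Conservativity is exactly the statement that $Q$ is a strong generator, which in a Barr-exact category is equivalent to being a regular generator; and the preservation of the relevant coequalizers follows from $Q$ being regular projective together with the fact that in a Barr-exact category every regular epimorphism is the coequalizer of its kernel pair and $\mQ(Q,-)$ preserves these because $Q$ is regular projective and $\mQ(Q,-)$ preserves limits (having a left adjoint it preserves limits, and being projective it sends regular epis to epis, hence regular epis in $\Set$). Assembling these gives monadicity of $\mQ(Q,-)$, so $K^Q$ is an equivalence $\mQ\simeq\W_E(\CC)$, which is of the required form; this simultaneously proves the final displayed assertion of the theorem.

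The main obstacle I anticipate is the verification that $\mQ(Q,-)$ preserves coequalizers of the pairs required by Beck's theorem — i.e. correctly packaging "regular projective generator in a Barr-exact category" into the precise hypotheses of monadicity. The cleanest route is probably: in a Barr-exact (hence regular) category, regular epimorphisms are stable and every morphism factors as a regular epi followed by a mono; $Q$ being a regular generator means the counit components $Q^{\mQ(Q,A)}\to A$ are regular epis; $Q$ being regular projective means $\mQ(Q,-)$ preserves regular epis; and then a standard argument (e.g. via the characterization in Subsection \ref{generator.def.} that a regular generator yields a full and faithful comparison functor, upgraded to an equivalence by conservativity) shows the comparison functor $\mQ\to\Set^{\T_Q}$ is an equivalence. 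One must also note — as in Remark \ref{morita.r.} — that since $\CC(\textsf{2},-)$ is conservative, the ordinary equivalence automatically lifts to a $\CC$-enriched equivalence, so the identification $\mQ\simeq\W_E(\CC)$ respects the quantaloid structure. A minor point to check along the way is that the object $Q$ produced in (ii) $\Rightarrow$ (iii) and the object $Q$ hypothesized in (iii) may be taken to be the same in the final identification, but since any regular projective generator will serve, this causes no difficulty.
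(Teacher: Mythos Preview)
Your proof is correct and follows essentially the same architecture as the paper's: (i) $\Rightarrow$ (ii) via monadicity of $\W_A(\CC)$ over $\Set$, and (iii) $\Rightarrow$ (i) by invoking Theorem \ref{equivalence.set.} once $\mQ(Q,-)$ is known to be monadic. The only difference is one of packaging: the paper handles both the equivalence (ii) $\Leftrightarrow$ (iii) and the monadicity of $\mQ(Q,-)$ under (iii) by a single citation to the well-known characterization of categories monadic over $\Set$ (MacDonald--Sobral), whereas you unpack this characterization by hand --- producing $Q=\Phi(1)$ for (ii) $\Rightarrow$ (iii) and running a Beck-style argument for (iii) $\Rightarrow$ (i). Your direct route is sound (and you correctly flag that the coequalizer-preservation step is the point needing care; in a Barr-exact category the cleanest phrasing is that a regular projective generator with copowers makes $\mQ(Q,-)$ conservative and regular-epi-preserving, which together with cocompleteness yields monadicity via the crude/reflexive form of Beck). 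What you gain is self-containment; what the paper gains is brevity. No substantive gap.
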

\begin{proof} Since any module category over a quantale is monadic over $\Set$ (see, \ref{modules.}), (i) implies (ii).
The equivalence (ii) $\Leftrightarrow$ (iii) is well known (e.g., \cite[Theorem 2.8]{MS}). Under (iii),
the functor $\mQ(Q,-):\mQ \to \Set$ is monadic and using Theorem \ref{equivalence.set.},  we see that 
$\mQ$ is equivalent to $\W_E(\CC)$, where $E=\Hom_\mQ(Q,Q)$. Thus, (iii) implies (i).
\end{proof}

Since categories which are monadic over $\Set$ are exactly, up to equivalence, (possibly non-finitary) varieties of
algebras (in the sense of universal algebra) (see, for instance, \cite{MaN}), Theorem \ref{sup.l.th.1} can be paraphrased as follows:

\begin{theorem} \label{paraphrased} The following are equivalent for a (finitary or infinitary) variety $\V$:
\begin{itemize}
  \item [(i)] $\V$ is enriched in $\CC$, i.e., $\V$  is a quantaloid.
  \item [(i)] $\V$ is equivalent to a module category over some quantale.
\end{itemize}
\end{theorem}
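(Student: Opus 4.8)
The plan is to obtain Theorem~\ref{paraphrased} as a direct translation of Theorem~\ref{sup.l.th.1} through the standard correspondence between varieties of algebras and categories monadic over $\Set$. First I would invoke (as cited from \cite{MaN}) the fact that a category is equivalent to a --- possibly infinitary --- variety of algebras precisely when it is monadic over $\Set$: a variety whose operations have arities bounded by a fixed cardinal determines a monad on $\Set$, every monad on $\Set$ arises in this way, and the finitary monads correspond exactly to the finitary varieties. With this in hand, ``$\V$ is a variety'' and ``$\V$ is monadic over $\Set$'' become interchangeable hypotheses.

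Next I would treat the two implications. For (i)$\,\Rightarrow\,$(ii): assuming $\V$ is both a quantaloid and monadic over $\Set$, the category $\V$ satisfies condition~(ii) of Theorem~\ref{sup.l.th.1} with $\mQ=\V$; hence, by the equivalence of the conditions there, $\V$ has a regular projective generator $Q$ admitting arbitrary small copowers, and the concluding assertion of that theorem yields an equivalence $\V\simeq\W_E(\CC)$ with $E=\Hom_\V(Q,Q)$ --- which, by Remark~\ref{morita.r.}, is even a $\CC$-enriched equivalence. For (ii)$\,\Rightarrow\,$(i): if $\V\simeq\W_A(\CC)$ for some quantale $A$, then $\V$ is monadic over $\Set$ by Subsection~\ref{modules.}, hence a variety; and $\V$ inherits a $\CC$-enrichment by transporting along the equivalence the canonical enrichment of $\W_A(\CC)$, whose hom-objects are the pointwise-ordered sup-lattices $\W_A(M,N)$ (Subsections~\ref{quantaloids.} and~\ref{modules.}), so $\V$ is a quantaloid.

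I do not anticipate a substantial obstacle, since the statement is essentially Theorem~\ref{sup.l.th.1} read through the variety/monad dictionary. The only points that warrant care are the precise form of that dictionary in the infinitary setting --- one must allow operations of unbounded arity, or equivalently fix a cardinal bound ensuring that the associated endofunctor of $\Set$ has a left adjoint, which is exactly what the cited treatment of infinitary varieties provides --- and the (routine) verification that a $\CC$-enrichment may be transported along an equivalence of categories, which is legitimate because an equivalence is in particular fully faithful.
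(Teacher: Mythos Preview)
Your proposal is correct and mirrors the paper's own treatment exactly: the paper does not give a separate proof but simply observes that, by the variety/monad dictionary from \cite{MaN}, Theorem~\ref{paraphrased} is a restatement of Theorem~\ref{sup.l.th.1}. Your write-up spells out both implications in more detail than the paper does (in particular the transport of the $\CC$-enrichment along the equivalence for (ii)$\Rightarrow$(i)), but the underlying argument is the same.
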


\begin{thm} \label{small.qt.}{\bf Functor categories are equivalent to module categories.}  \em
Given a small quantaloid $\mQ$, consider the functor \[Q=\oplus_{_{X \in \mQ}} \mQ(X, -): \mQ \to \CC.\]
Then $Q$ is regular projective generator for the category  $\CC\textsf{-Cat}(\mQ, \CC)_0$ of
$\CC$-functors and $\CC$-natural transformations from $\mQ$ to $\CC$ (see \cite{Mesab}).
Moreover, since the category $\CC$, being monadic over $\Set$, is Barr-exact, so also is
the category $\CC\textsf{-Cat}(\mQ, \CC)_0$. Applying Theorem \ref{sup.l.th.1} gives
that there is an equivalence $\CC\textsf{-Cat}(\mQ, \CC)_0\simeq \W_E(\CC)$, where
$E=\CC\textsf{-Cat}(\mQ, \CC)(Q,Q)$. Thus, we have proved the following, which is \cite[Theorem 1.1]{Mesab}:

\begin{theorem} \label{small.mor.}  For any small quantaloid $\mQ$, 
$\CC\textsf{-\emph{Cat}}(\mQ, \CC)_0$ of $\CC$-functors and $\CC$-natural transformations from $\mQ$ to $\CC$
is equivalent to a module category over a quantale.
\end{theorem}
\end{thm}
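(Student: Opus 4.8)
The plan is to apply Theorem~\ref{sup.l.th.1} to the quantaloid $\mQ' = \CC\textsf{-Cat}(\mQ,\CC)_0$, so the task reduces to verifying hypothesis (iii) for this category: that it is Barr-exact, that the specified functor $Q = \oplus_{X \in \mQ}\mQ(X,-)$ is a regular projective generator, and that $\mQ'$ has arbitrary small copowers of $Q$. First I would note that $\CC\textsf{-Cat}(\mQ,\CC)_0$ is indeed a quantaloid: hom-objects $\CC\textsf{-Cat}(\mQ,\CC)(F,G)$ are sup-lattices (computed as the appropriate end, or simply as a pointwise sub-sup-lattice of $\prod_{X}[\,F X, G X\,]$ cut out by $\CC$-naturality, which is a sup-lattice since it is closed under suprema), and composition is sup-preserving in each variable. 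Limits and colimits in $\CC\textsf{-Cat}(\mQ,\CC)_0$ are computed pointwise in $\CC$; since $\CC$ is complete, cocomplete and Barr-exact (subsection~\ref{algebras}), and pointwise limits/colimits and pointwise factorizations inherit exactness, $\CC\textsf{-Cat}(\mQ,\CC)_0$ is Barr-exact as well. This is the step I would write out most carefully, since "pointwise" exactness arguments need the functor category's regular epis to be detected pointwise, which holds here because $\CC$ has all limits and colimits.

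Next I would identify the generator. By the enriched Yoneda lemma, for each $X \in \mQ$ one has $\CC\textsf{-Cat}(\mQ,\CC)(\mQ(X,-), F) \cong F X$ naturally in $F$, so the representable $\mQ(X,-)$ corepresents evaluation at $X$; consequently the family $\{\mQ(X,-)\}_{X \in \mQ}$ is a (strong, hence by Remark~\ref{remark.proj.}-style reasoning in this pointwise setting, regular) generating set, and each $\mQ(X,-)$ is regular projective because evaluation at $X$ preserves regular epis (these being pointwise). Since $\mQ$ is small, the biproduct $Q = \oplus_{X \in \mQ}\mQ(X,-)$ exists (coproducts are biproducts in any quantaloid, and $\CC\textsf{-Cat}(\mQ,\CC)_0$ is one — subsection~\ref{quantaloids.}). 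A coproduct of regular projectives is regular projective (subsection~\ref{projective.}), and a coproduct over a generating set is a single generator; hence $Q$ is a regular projective generator. For the copowers: $\CC\textsf{-Cat}(\mQ,\CC)_0$ has all small colimits (pointwise), so in particular all small copowers of $Q$ exist.

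With (iii) verified, Theorem~\ref{sup.l.th.1} applies verbatim and yields an equivalence $\CC\textsf{-Cat}(\mQ,\CC)_0 \simeq \W_E(\CC)$ with $E = \CC\textsf{-Cat}(\mQ,\CC)(Q,Q)$, which is exactly the desired statement; Remark~\ref{morita.r.} moreover upgrades it to an equivalence of $\CC$-enriched categories. I expect the main obstacle to be the verification that $Q$ is a \emph{regular} projective generator rather than merely a strong one — i.e. confirming that in $\CC\textsf{-Cat}(\mQ,\CC)_0$ regular epimorphisms coincide with strong/extremal epimorphisms. The cleanest route is to observe that, since $\CC$ is monadic over $\Set$, the functor category $\CC\textsf{-Cat}(\mQ,\CC)_0$ is itself monadic over $\Set^{\mathrm{ob}\,\mQ}$ and hence over $\Set$ (composing with the monadic forgetful functor out of a power of $\Set$), so by Remark~\ref{remark.proj.} the various notions of epimorphism and of generator collapse there just as they do in $\W_A(\CC)$; alternatively one invokes directly that pointwise regular epis in a functor category into a regular category are the extremal epis. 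Everything else is a routine transcription of the general machinery already established in Section~4.
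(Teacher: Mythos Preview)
Your proposal is correct and follows essentially the same approach as the paper: verify hypothesis~(iii) of Theorem~\ref{sup.l.th.1} for the functor category and apply it. The only difference is that where the paper simply cites \cite{Mesab} for the fact that $Q=\oplus_{X}\mQ(X,-)$ is a regular projective generator, you supply the argument directly via the enriched Yoneda lemma and pointwise exactness, which is exactly what lies behind that citation.
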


\
\
\

\bigskip

\section{Morita equivalent quantales}

In this section, we prove a Morita type theorem for quantales in terms of projective generators.

We say that two quantales  are \emph{Morita equivalent} if they have equivalent categories of right modules.
Essentially  the same proof as for ordinary rings yields:

\begin{theorem} Two commutative quantales are Morita equivalent if and only if they are isomorphic.
\end{theorem}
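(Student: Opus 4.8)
The plan is to mimic the classical proof of the commutative Morita theorem for rings, using the characterization of Morita equivalence provided by the machinery already developed. First I would observe that if two commutative quantales $A$ and $B$ are isomorphic, then clearly $\W_A(\CC)\simeq\W_B(\CC)$, so the interesting direction is the converse. So suppose $\Phi:\W_A(\CC)\xr{\simeq}\W_B(\CC)$ is an equivalence of module categories. The strategy is to recover each quantale from its module category as an endomorphism quantale of a suitable distinguished object, namely the free module on one generator, which is the monoidal unit for the relevant module structure: concretely, $A\cong\Hom_A(A,A)$ as quantales, where $A$ is regarded as the free right $A$-module $F^A(\textsf{2})$, and similarly $B\cong\Hom_B(B,B)$. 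The key point is that $F^A(\textsf{2})=A$ is a regular projective generator of $\W_A(\CC)$ (it is the image under the left adjoint $F^A$ of the generator $\textsf{2}$ of $\CC$, and left adjoints preserve generators while free objects are always regular projective), and by Theorem \ref{sup.l.th.1} together with Theorem \ref{equivalence.set.} the quantale $A$ is recovered, up to isomorphism, as $\Hom_\mQ(Q,Q)$ for $\mQ=\W_A(\CC)$ and $Q=A$.

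Next I would transport $Q=A$ across the equivalence: $\Phi(A)$ is again a regular projective generator of $\W_B(\CC)$ (equivalences preserve generators, projectives, and copowers), and $\Phi$ induces an isomorphism of quantales $\Hom_A(A,A)\cong\Hom_B(\Phi(A),\Phi(A))$, since $\Phi$ is — or can be promoted to, by Remark \ref{morita.r.} — an $\CC$-enriched equivalence and hence induces isomorphisms on all internal hom-objects compatibly with composition. Therefore $A\cong\Hom_B(P,P)$ as quantales, where $P:=\Phi(A)$. It now remains to identify $\Hom_B(P,P)$ with $B$ itself. In the ring case this is where commutativity enters: a projective generator of $\W_B$ gives $\Hom_B(P,P)\cong B$ precisely when $P$ is, roughly, "rank one", and over a commutative ring every projective generator is locally of this form up to Morita-invariant data, but the endomorphism ring of an arbitrary progenerator need not be commutative unless one already knows it is. The cleanest route here is: since $\W_A(\CC)$ has a commutative endomorphism quantale of its canonical progenerator, and $\Hom_B(P,P)\cong A$ is commutative, we get that $B$ and $A$ are Morita equivalent via a progenerator $P$ whose endomorphism quantale is commutative; one then shows directly that this forces $P\cong B$ as a right $B$-module (so that $\Hom_B(P,P)\cong\Hom_B(B,B)\cong B$), by exploiting the biproduct structure of quantaloids — a progenerator over a commutative base that is not free would split off, in the presence of biproducts, an idempotent decomposition of $B$ incompatible with commutativity, forcing $P$ to be indecomposable and cyclic.

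The main obstacle I expect is exactly this last step: ruling out "nontrivial" progenerators over a commutative quantale, i.e.\ showing that commutativity of $\Hom_B(P,P)$ forces $P\cong B$. Unlike the ring case there is no trace map or dimension function immediately available, and idempotents in a commutative quantale need not be central in any useful refined sense beyond the obvious one; what saves the argument is that in $\CC$, and hence in $\W_B(\CC)$, every epimorphism splits after applying $\CC(\textsf{2},-)$ (Remark \ref{remark.proj.}), coproducts are biproducts, and idempotents split, so a progenerator $P$ yields a split epimorphism $B^{(n)}\twoheadrightarrow$ (something)\,\,with $P$ a retract of some $B^{(X)}$ and $B$ a retract of some $P^{(Y)}$; analyzing the resulting idempotent matrix over the commutative quantale $B$ — using that $\Hom_B(P,P)$ is commutative to force the idempotent to be, up to conjugation, a single diagonal idempotent $e$ with $eBe\cong B$ hence $e=\textsf{1}$ — pins down $P\cong B$. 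I would carry this out via the matrix/idempotent description of Morita equivalence that the paper develops in Section 6 (Theorem \ref{m.full.idemp.}), specializing it to the commutative case, rather than redoing it by hand.
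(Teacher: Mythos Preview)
The paper gives no detailed argument here: it simply says ``essentially the same proof as for ordinary rings yields'' the result. That standard proof proceeds via the \emph{center}: for any quantale $A$ one identifies the center $Z(A)=\{a\in A: ab=ba \text{ for all } b\}$ with the ($\CC$-enriched) endomorphism quantale of the identity functor on $\W_A(\CC)$, which is manifestly invariant under equivalence of categories; when $A$ is commutative, $Z(A)=A$, and the theorem follows immediately.

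Your route is genuinely different, and it contains a real gap at the step you yourself flag as the obstacle. You reach $A\cong\Hom_B(P,P)$ correctly, with $P=\Phi(A)$ a projective generator in $\W_B(\CC)$, and then try to conclude by proving $P\cong B$. But this last claim is \emph{false} already for commutative rings: if $B$ is a Dedekind domain and $L$ a non-principal invertible ideal, then $L$ is a progenerator with $\Hom_B(L,L)\cong B$ commutative, yet $L\not\cong B$. Your idempotent-matrix sketch (``force the idempotent to be, up to conjugation, a single diagonal idempotent $e$ with $eBe\cong B$ hence $e=\textsf{1}$'') therefore cannot succeed as stated, and there is no reason to expect the lattice-theoretic features of quantales to rescue it. What is true, and what you actually need, is only the weaker statement $\Hom_B(P,P)\cong B$; but the clean way to get that is precisely the center argument above, not an analysis of $P$ itself. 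I would recommend abandoning the attempt to pin down $P$ and instead verifying directly that $Z(A)\cong\End_{\CC}(\textsf{Id}_{\W_A(\CC)})$ as quantales --- the argument is the same as for rings, using that a natural endomorphism of the identity is determined by its component at the free module $A$ and that naturality with respect to left multiplications forces this component to be multiplication by a central element.
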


Since any module category over a quantale is Barr-exact, in the light of Remark \ref{remark.proj.}, we get 
from Theorem \ref{sup.l.th.1} the following theorem.

\begin{theorem} \label{morita.3.} Let $A$ be a quantale and $Q\in\emph{\W}_A(\CC)$ be a projective generator.
Then the quantales $A$ and $\emph{\Hom}_A(Q,Q)$ are Morita equivalent.
\end{theorem}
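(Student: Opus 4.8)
The plan is to apply Theorem~\ref{sup.l.th.1} to the quantaloid $\W_A(\CC)$ with the distinguished object $Q$. First I would record that $\W_A(\CC)$ satisfies condition~(iii) of that theorem. It is a quantaloid (it is enriched in $\CC$ by Subsection~\ref{quantaloids.}), it is Barr-exact because it is monadic over $\CC$ and hence over $\Set$ (Subsection~\ref{modules.}), and by hypothesis $Q$ is a projective generator. By Remark~\ref{remark.proj.}, in a module category over a quantale the projective generators coincide with the regular projective generators, so $Q$ is in fact a regular projective generator. Finally $\W_A(\CC)$, being cocomplete (it is monadic over the cocomplete category $\CC$ and $U^A$ creates colimits, as noted in Subsection~\ref{modules.}), admits arbitrary small copowers of $Q$. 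Hence all the hypotheses of Theorem~\ref{sup.l.th.1}(iii) are met.

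Next I would simply invoke the last sentence of Theorem~\ref{sup.l.th.1}: when the equivalent conditions hold for a quantaloid $\mQ$ with regular projective generator $Q$, there is an equivalence $\mQ \simeq \W_E(\CC)$ with $E=\Hom_\mQ(Q,Q)$. Applying this to $\mQ=\W_A(\CC)$ gives an equivalence $\W_A(\CC)\simeq \W_E(\CC)$ where $E=\Hom_{\W_A(\CC)}(Q,Q)=\Hom_A(Q,Q)$, in the notation of Subsection~\ref{quantaloids.}. By the definition of Morita equivalence given at the start of this section, this says precisely that $A$ and $\Hom_A(Q,Q)$ are Morita equivalent, which is the claim.

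The only point requiring a little care — and the step I would flag as the main (if modest) obstacle — is the verification that $Q$ is genuinely a \emph{regular} projective generator, i.e. that nothing is lost in passing from ``projective generator'' to ``regular projective generator'' as required by condition~(iii). This is exactly what Remark~\ref{remark.proj.} supplies: because $\W_A(\CC)$ is both monadic and comonadic over $\CC$, and because in $\CC$ regular epimorphisms, strong epimorphisms and epimorphisms all coincide, the same coincidence holds in $\W_A(\CC)$; consequently projective objects coincide with regular projectives and generators coincide with regular generators there. So the hypothesis of the theorem, phrased with the plain notions, already delivers the regular versions needed to feed into Theorem~\ref{sup.l.th.1}. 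With that identification in hand the proof is essentially a one-line citation of Theorem~\ref{sup.l.th.1}.
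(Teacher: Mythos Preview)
Your proposal is correct and follows exactly the paper's approach: the paper's proof is literally the one-line remark that since $\W_A(\CC)$ is Barr-exact, Remark~\ref{remark.proj.} lets one feed the hypothesis directly into Theorem~\ref{sup.l.th.1}. You have simply spelled out the verifications (quantaloid structure, Barr-exactness, cocompleteness for copowers, and the upgrade from projective generator to regular projective generator) that the paper leaves implicit.
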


The following theorem gives a necessary and sufficient condition for two quantales to be Morita equivalent.

\begin{theorem} \label{morita.2.} Two quantales $A$ and $B$  are Morita equivalent if and only if
there exists a projective generator $Q \in \emph{\W}_A(\CC)$ such that $B\simeq \emph{\W}_A(\CC)(Q,Q)$ as quantales.
\end{theorem}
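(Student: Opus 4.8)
The plan is to prove the two implications separately, leaning on Theorem~\ref{morita.3.} for the ``if'' direction and on Theorem~\ref{sup.l.th.1} for the ``only if'' direction.

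\medskip

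\textbf{The ``if'' direction.} Suppose $Q\in\W_A(\CC)$ is a projective generator with $B\simeq\W_A(\CC)(Q,Q)$ as quantales. By Theorem~\ref{morita.3.}, the quantales $A$ and $\Hom_A(Q,Q)$ are Morita equivalent, i.e.\ $\W_A(\CC)\simeq\W_{\Hom_A(Q,Q)}(\CC)$. But the quantale $\Hom_A(Q,Q)$ has underlying sup-lattice $\W_A(\CC)(Q,Q)$ (recall from Subsection~\ref{quantaloids.} that $\W_A(M,N)$ is the sup-lattice underlying $\Hom_A(M,N)$), and its multiplication is composition; so $\Hom_A(Q,Q)$ is exactly the quantale denoted $\W_A(\CC)(Q,Q)$ in the statement. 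Hence $B\simeq\W_A(\CC)(Q,Q)\cong\Hom_A(Q,Q)$ as quantales, and therefore $\W_B(\CC)\simeq\W_{\Hom_A(Q,Q)}(\CC)\simeq\W_A(\CC)$, so $A$ and $B$ are Morita equivalent. (The only point needing a word of care is that an isomorphism of quantales induces an isomorphism, not merely an equivalence, of the associated module categories; this is routine transport of structure along the isomorphism.)

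\medskip

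\textbf{The ``only if'' direction.} Suppose $A$ and $B$ are Morita equivalent, and fix an equivalence $\Phi:\W_B(\CC)\xrightarrow{\ \simeq\ }\W_A(\CC)$. The free module $B$ (i.e.\ $F^B(\textsf{2})=\textsf{2}\ot B\cong B$) is a regular projective generator of $\W_B(\CC)$: indeed $U^B$ is monadic, so $B=F^B(\textsf2)$ is a regular generator because $\textsf2$ generates $\CC$, and it is regular projective because $\W_B(\CC)(B,-)\cong U_B$ preserves regular epimorphisms (it has a right adjoint, by Subsection~\ref{modules.}). Set $Q:=\Phi(B)\in\W_A(\CC)$. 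Equivalences preserve regular epimorphisms, regular projectives, and generators, so $Q$ is a (regular) projective generator of $\W_A(\CC)$; by Remark~\ref{remark.proj.} ``regular projective generator'' and ``projective generator'' coincide in $\W_A(\CC)$. Finally $\W_A(\CC)(Q,Q)=\W_A(\CC)(\Phi(B),\Phi(B))\cong\W_B(\CC)(B,B)$ as sup-lattices, and the isomorphism respects composition because $\Phi$ is a functor, hence it is an isomorphism of quantales; and $\W_B(\CC)(B,B)=\Hom_B(B,B)$ has underlying sup-lattice $\W_B(B,B)\cong U_B(B)$, the underlying sup-lattice of $B$, with multiplication given by the quantale multiplication of $B$, so $\W_B(\CC)(B,B)\cong B$ as quantales. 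Therefore $B\cong\W_A(\CC)(Q,Q)$, as required.

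\medskip

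The main obstacle is the bookkeeping around the identification $\W_B(\CC)(B,B)\cong B$ as quantales (equivalently, that the quantale $B$ is recovered as the endomorphism quantale of its own free rank-one module) together with the fact that an equivalence of categories, being enriched over $\CC$ by Remark~\ref{morita.r.}, transports this endomorphism quantale faithfully; neither point is deep, but both must be stated carefully so that ``isomorphic as quantales'' — rather than merely ``as sup-lattices'' or ``Morita equivalent'' — is what comes out. Everything else is a direct application of Theorems~\ref{morita.3.} and~\ref{sup.l.th.1} together with the stability of (regular) projective generators under equivalence recorded in Subsections~\ref{generator.def.} and~\ref{projective.}.
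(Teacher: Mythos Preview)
Your proof is correct, and the ``if'' direction matches the paper's argument essentially verbatim. The ``only if'' direction, however, takes a genuinely different and more elementary route than the paper.

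The paper's argument for the forward direction runs the comparison theorem machinery: it sets up the adjoint triangle with $K=\Hom_A(Q,-):\W_A(\CC)\to\W_B(\CC)$ over $\Set$, invokes Theorem~\ref{equivalence.} to conclude that the induced monad morphism $s_K:\T_B\to\T_Q$ is an isomorphism, and then extracts from the $\textsf{t}$-component of this monad isomorphism an explicit quantale isomorphism $k:B\to\Hom_A(Q,Q)$. Your argument bypasses this entirely: you take $Q=\Phi(B)$ for a chosen equivalence $\Phi$, observe that equivalences preserve projective generators, and read off $\Hom_A(Q,Q)\cong\Hom_B(B,B)\cong B$ directly from the action of $\Phi$ on endomorphisms. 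This is shorter and conceptually cleaner; the paper's approach has the advantage of producing the isomorphism explicitly via the monad comparison, tying the result back to the Galois formalism that drives the rest of the paper.

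One small point deserves tightening. You assert that $\Phi$ induces an isomorphism $\W_A(\CC)(\Phi B,\Phi B)\cong\W_B(\CC)(B,B)$ \emph{of sup-lattices}, citing Remark~\ref{morita.r.}; but that remark only concerns the particular functor $K^Q$, not an arbitrary equivalence $\Phi$. What you actually need is that any ordinary equivalence between module categories over quantales is automatically $\CC$-enriched. This is true, and the reason is already implicit in Subsection~\ref{quantaloids.}: since small coproducts are biproducts, the sup on hom-sets is recoverable categorically as $\bigvee_i f_i=\nabla\circ(\oplus_i f_i)\circ\Delta$, and any equivalence preserves biproducts, diagonals and codiagonals. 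With that sentence added, your argument is complete.
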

\begin{proof} Suppose first that two quantales $A$ and $B$ are Morita equivalent. Then there exists an
equivalence $\W_A \simeq \W_B$. From Remark \ref{remark.proj.} and then applying Theorem \ref{sup.l.th.1}, we deduce that there exists
a projective generator $Q \in \W_A(\CC)$ such that the functor \[K=\Hom_A(Q,-):\W_A(\CC)\to \W_B(\CC)\] is an equivalence
and the diagram
\[\xymatrix @R=38pt@C=45pt{
     \W_A(\CC)\ar[r]^-{K} \ar@/_1pc/[dr]|(0.4){\W_A(\CC)(Q,-)\dashv  Q^{(-)}}
      &\W_B (\CC) \ar[d]|-{B^{(-)} \dashv U_B}
      \\
      \text{ }
      & \Set }\]
is an adjoint triangle. Since $K$ is an equivalence, it follows from Theorem \ref{equivalence.} that the induced morphism
\[s_{K}=\CC(\textsf{2}, \gamma_{K}):\CC(\textsf{2},B^{(-)}) \to \CC(\textsf{2},\Hom_A(Q, Q^{(-)}))\]
of the monads on $\Set$ generated by the two adjunctions, is an isomorphism.
Then the map $\CC(\textsf{2},k):\CC(\textsf{2}, B) \to \CC(\textsf{2},\Hom_A(Q, Q))$, where $k:B \to \Hom_A(Q, Q)$
is the composite
\[B \xr{\simeq} \textsf{t} \times B \xr{(\gamma_{K})_{\textsf{t}}}\Hom_A(Q,Q^{(\textsf{t})})
\xr{\Hom_A(\CC)(Q,\,\simeq)}\Hom_A(Q,Q)\] is an isomorphism of ordinary monoids (e.g. \cite{FPN}).
Here, $\textsf{t}$ is a chosen terminal object in $\Set$, i.e., a set with only one element. It follows -- since the functor 
$\CC(\textsf{2},-):\CC \to \Set$ is conservative -- that $k:B \to \Hom_A(Q,Q)$ is an isomorphism of quantales.

For the converse, if $Q \in \W_A(\CC)$ is a projective generator, then there is an equivalence $\W_A(\CC) \simeq \W_{\Hom_A(Q,Q)}(\CC)$ 
by Theorem \ref{morita.3.}, and composing this with the evident equivalence $\W_{\Hom_A(Q,Q)}(\CC)\simeq \W_{B}(\CC)$
induced by the isomorphism $B\simeq \W_A(Q,Q)$ of quantales, gives us the desired equivalence $\W_A (\CC) \simeq \W_{B}(\CC)$.
\end{proof}

\
\
\
\

\section{Morita equivalence of quantales induced by idempotents}

\begin{thm} \label{sub.idemp.}{\bf Submodules defined by idempotents.}  \em
Let $A$ be a quantale, $M$ be a right $A$-module, $M'$ be a non-empty subset of $M$ and $A'$ a non-empty
subset of $A$. Put 
\[M'A=\{\bigvee_{m\in M' } m\cdot a_m \,\mid \,\{a_m\}_{m \in M'}\subseteq A\},\]
\[MA'=\{\bigvee_{a\in A'} m_a\cdot a \,\mid \,\{m_a\}_{a \in A'}\subseteq M\}.\]
Note that $M'A$ (resp. $MA'$) can be equally seen as the image of the composite 
\[M'\ot A \xr{i_{M'}\ot A} M \ot A \xr{\cdot} M \,\,\, \text{(resp.} \,\,\,M\ot A' \xr{M \ot i_{A'}} M \ot A \xr{\cdot} M),\] 
where $i_{M'}:M' \to M$ and $i_{A'}:A' \to A$ are the canonical inclusions. It is easy to see that $M'A$ is a submodule 
of the right $A$-module $M$ and is called the
\emph{submodule of $M$ generated by $M'$}. If, in addition, $M$ is an $(B,A)$-bimodule, $B$ being another quantale, 
then $MA'$ becomes a left $B$-module. When $M'=\{m\}$ (resp. $A'=\{a\}$) is one-point subset of $M$ (resp. $A$), then we write
$mA$ (resp. $Ma$) instead of $\{m\}A$ (resp. $M\{a\}$). Left and right side versions of these concepts are similarly defined.

Given two quantales $A, B$, a $(B, A)$-bimodule $M$ and an idempotent $e$ in $A$, one has an idempotent endomorphism of 
left $B$-modules $$-\cdot e: M \to M, m \longmapsto m \cdot e,$$ whose image (and hence the splitting object) is $Me$.
Since $Me$ is an equalizer of $-\cdot e$ and $\textsf{id}_M$, it follows that $Me=\{m\in M:m\cdot e=m\}$. Then 
the composite $M \xr{p_M} eM \xr{i_M} M$,  where $p_M$ is the map $m \to m \cdot e$, while $i_M$ is the canonical inclusion, 
is a splitting of the idempotent endomorphism $-\cdot e$. Hence in particular $Me$ is a left $B$-module. Symmetrically, for 
any  $(A,B)$-bimodule  $N$, the set $eN=\{e \cdot n: n\in N\}=\{n: n\in N:n=e\cdot n\}$ is a right $B$-module
and the idempotent endomorphism $N \xr{n \to n\cdot e}N$ of of right $B$-modules factors through $eN$. In particular,
the set $eA=\{ea: a\in A\}$ (resp. $Ae=\{a e: a\in A\}$) is a right (resp. left) $A$-module.  Observe that the
set $eAe=\{e a e: a\in A\}$, which can be seen as the sup-lattice $e(Ae)$ (or as $(eA)e$), is a quantale with unit $e$.

\end{thm}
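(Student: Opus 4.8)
The plan is to verify, axiom by axiom, that $(eAe,\ast,e)$ is a quantale, where $\ast$ denotes the restriction to $eAe$ of the multiplication of $A$. The one computation that will be used throughout is that for every $x\in eAe$ one has $ex=x=xe$: indeed, writing $x=eae$ and using $ee=e$ gives $ex=(ee)ae=eae=x$ and $xe=ea(ee)=eae=x$. In particular $e=eee\in eAe$, and $0=e\,0\,e\in eAe$ as well.

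First I would check that $eAe$, with the order inherited from $A$, is a sup-lattice on which suprema are computed exactly as in $A$. The map $A\to A$, $a\mapsto eae$, is an idempotent endomorphism of the sup-lattice $A$ (its square sends $a$ to $e(eae)e=eae$, by $ee=e$), and its image is precisely $eAe$; since $\CC$ is Cauchy complete this idempotent splits, so $eAe$ underlies a sup-lattice, and the two descriptions $e(Ae)$ and $(eA)e$ (splittings of $e\cdot(-)$ on the left module $Ae$, resp. of $(-)\cdot e$ on the right module $eA$) yield the same object. Concretely, for any family $\{x_i\}_{i\in I}\subseteq eAe$ the element $\bigvee^A_i x_i$ again lies in $eAe$, since $e\bigl(\bigvee^A_i x_i\bigr)e=\bigvee^A_i(ex_ie)=\bigvee^A_i x_i$ by bilinearity of $\ast$ and $ex_ie=x_i$. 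Hence $eAe$ is closed under all suprema formed in $A$, so it is a complete lattice in the induced order whose suprema coincide with those of $A$ (its top element and its binary meets need not agree with those of $A$, but this is irrelevant).

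Next I would check that $\ast$ restricts to a binary operation on $eAe$: for $x,y\in eAe$ we have $e(xy)e=(ex)(ye)=xy$, so $xy=e(xy)e\in eAe$. Associativity of $\ast$ on $eAe$ is inherited from $A$. The element $e$ belongs to $eAe$ and, by the identity $ex=x=xe$, is a two-sided unit for $\ast$. Finally, distributivity of $\ast$ over suprema in $eAe$ follows from distributivity in $A$ together with the two facts just proved: for $x\in eAe$ and $\{y_i\}_{i\in I}\subseteq eAe$, since suprema in $eAe$ are suprema in $A$ and each $xy_i$ lies in $eAe$, we get $x\ast\bigl(\bigvee_i y_i\bigr)=x\ast\bigl(\bigvee^A_i y_i\bigr)=\bigvee^A_i(xy_i)=\bigvee_i(x\ast y_i)$, and symmetrically $\bigl(\bigvee_i y_i\bigr)\ast x=\bigvee_i(y_i\ast x)$. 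This shows $(eAe,\ast,e)$ is a quantale with unit $e$.

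I do not anticipate a genuine obstacle. The only step that deserves a moment's thought is the first one — identifying the abstract splitting object of the idempotent $a\mapsto eae$ with the subset $eAe$ and confirming that suprema in $eAe$ are literally inherited from $A$ rather than ``twisted'' by the splitting — and even there the verification is immediate from $ex=x=xe$ and the bilinearity of $\ast$; all the remaining steps are direct manipulations of the defining identities of a quantale.
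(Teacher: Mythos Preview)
Your argument is correct. Note, however, that in the paper this passage is not a theorem with a proof: it is an expository subsection (the \texttt{thm} environment here is used as a numbered heading, not as a formal theorem), and the final clause ``Observe that \ldots\ is a quantale with unit $e$'' is simply asserted without further justification. What you have written is precisely the routine verification the paper leaves to the reader, and it matches the paper's implicit reasoning: $eAe$ is the splitting object of the sup-lattice idempotent $a\mapsto eae$, the induced order has suprema computed in $A$, multiplication and unit restrict, and distributivity is inherited. There is nothing to compare and no gap to flag.
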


\begin{proposition}\label{idempotent.} Let $A$ be a quantale and $e$ be an idempotent in $A$. For any right $A$-module $M$
there is a natural  isomorphism $\alpha_{M}:\emph{\Hom}_A(eA, M) \simeq Me$ in $\CC$. 
\end{proposition}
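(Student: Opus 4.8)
The plan is to exhibit the isomorphism $\alpha_M$ explicitly and check naturality and invertibility directly, exploiting the fact that $eA$ is the free-module-like object $A$ truncated by the idempotent $e$. The starting observation is that $eA$, being a right $A$-module generated by the single element $e$ with $e \cdot e = e$, should corepresent the functor $M \mapsto Me = \{m \in M : m\cdot e = m\}$. Concretely, I would define $\alpha_M : \Hom_A(eA, M) \to Me$ by $\alpha_M(f) = f(e)$, and check first that this lands in $Me$: since $e \in eA$ satisfies $e = e \cdot e$ (as $e$ is idempotent and lies in $eA$), and $f$ is a module morphism, $f(e) = f(e \cdot e) = f(e)\cdot e$, so $f(e) \in Me$. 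In the reverse direction, given $m \in Me$, define $\beta_M(m) : eA \to M$ by $\beta_M(m)(ea) = m \cdot a$ for $a \in A$; one must verify this is well-defined (if $ea = ea'$ then $m \cdot a = m\cdot e \cdot a = m \cdot ea = m \cdot ea' = m\cdot e\cdot a' = m \cdot a'$, using $m \in Me$), sup-preserving (immediate from the module axioms \eqref{module} and the fact that suprema in $eA$ are computed as $\bigvee e a_i = e(\bigvee a_i)$, since $eA$ is the image of $-\cdot e$), and $A$-linear (clear from associativity of the action).

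The next step is to check that $\alpha_M$ and $\beta_M$ are mutually inverse. For $m \in Me$, $\alpha_M(\beta_M(m)) = \beta_M(m)(e) = \beta_M(m)(e\cdot 1) = m \cdot 1 = m$. Conversely, for $f \in \Hom_A(eA, M)$, $\beta_M(\alpha_M(f))(ea) = f(e)\cdot a = f(e\cdot a) = f(ea)$, using that $f$ is a module morphism and $e \cdot a = ea$ in $eA$; hence $\beta_M(\alpha_M(f)) = f$. Both $\alpha_M$ and $\beta_M$ are order-preserving by construction (evaluation at $e$ is monotone, and $\beta_M$ is monotone in $m$ since the action $-\cdot a$ is monotone), so $\alpha_M$ is an isomorphism of sup-lattices. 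Finally, for naturality: given a morphism $g : M \to N$ of right $A$-modules, one checks $\alpha_N \circ \Hom_A(eA, g) = (\text{restriction of } g) \circ \alpha_M$ by evaluating both sides on an arbitrary $f \in \Hom_A(eA,M)$ at $e$, obtaining $g(f(e))$ on each side; one should also note that $g$ restricts to a map $Me \to Ne$ since $g$ commutes with $-\cdot e$.

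I do not anticipate a genuine obstacle here; the proposition is the quantale analogue of the classical module-theoretic identity $\Hom_R(eR, M) \cong Me$, and the only points requiring care are the well-definedness of $\beta_M(m)$ on $eA$ (which uses $m \cdot e = m$ in an essential way) and the verification that $\beta_M(m)$ preserves arbitrary suprema rather than merely finite ones — both of which follow cleanly from the module axioms and the description of $eA$ in Subsection~\ref{sub.idemp.} as the splitting object of the idempotent $-\cdot e$ on $A$. The mildest subtlety is bookkeeping the identification $eA = \{ea : a \in A\} = \{a \in A : ea = a\}$ and using it consistently when manipulating elements; once that is fixed, every step is a routine calculation with the defining equations \eqref{quantale.} and \eqref{module}.
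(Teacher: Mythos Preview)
Your proof is correct and follows essentially the same approach as the paper: both define the isomorphism as evaluation at $e$ (the paper phrases this as the composite $\Hom_A(eA,M) \xrightarrow{\Hom_A(p_A,M)} \Hom_A(A,M) \simeq M$, which unwinds to $f \mapsto f(e)$), and both construct the inverse by $m \mapsto (ea \mapsto m\cdot a)$ with the same well-definedness check. One small slip in your final paragraph: $eA$ is the splitting object of the idempotent $e\cdot -$ (left multiplication, a right $A$-module map), not $-\cdot e$; this does not affect your computations, which use the correct description $eA = \{ea : a \in A\}$ throughout.
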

\begin{proof} Consider the composite \[q:\Hom_A(eA, M) \xr{\Hom_A(p_A, M)} \Hom_A(A, M)\simeq M,\] where $p_A:A \to eA$
is the first morphism in the factorization $A\xr{p_A}eA \xr{i_A}A$ of the idempotent morphism $e\cdot -:A \to A$. Since $p_A$ is
an epimorphism, $q$ is injective. We claim that the image $\textsf{Im}(q)$ of $q$ is $Me$. 
Indeed, since for any $f\in \Hom_A(eA, M)$, $\Hom_A(p_A, M)(f)=fp_A$, it follows that $q(f)=(fp_A)(\textsf{1})=f(p_A(\textsf{1}))=f(e)$ and then 
\[q(f)\cdot e=f(e)\cdot e=f (ee)=f(e)=q(f). \] Hence, $\textsf{Im}(q)\subseteq Me$. Conversely,
if $m\in Me$, then $m\cdot e=m$ and if $f_m:eA \to M$ is given by $f_m(ea)=m\cdot a$, then $f_m$ is defined correctly (for, if $ea=ea'$,
then $m\cdot a=(m \cdot e)\cdot a=m \cdot (ea)=m \cdot (ea')=(m \cdot e)\cdot a'=m\cdot a'$) and is a morphism of right 
$A$-modules. Moreover, $q(f_m)=f_m(e)=m \cdot e=m$ and hence $Me \subseteq \textsf{Im}(q)$. Consequently, $q$ induces an isomorphism 
$\Hom_A(eN, M) \simeq \Hom_A(N, M)e$.
\end{proof}

\begin{remark}\label{eA} \em It is straightforward to verify that the isomorphism \[\alpha_A:\Hom_A(eA, eA) \simeq eAe\] is in fact 
 an isomorphism of quantales.
\end{remark}

Following the terminology of ring theory, we shall call an idempotent $e$ in a quantale $A$ \emph{full} if $AeA=A$.
Here $AeA=(Ae)A (=A(eA))$.

\begin{proposition}\label{generator.} Let $A$ be a quantale and $e$ be an idempotent in $A$. The right $A$-module $eA$ is a
generator for the category $\emph{\W}_A(\CC)$ if and only if the idempotent $e$ is full.
\end{proposition}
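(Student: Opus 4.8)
The plan is to characterize when $eA$ is a generator for $\W_A(\CC)$ using the criterion recalled in Subsection \ref{generator.def.}: since $\W_A(\CC)$ admits arbitrary copowers of $eA$ (it is cocomplete) and regular epimorphisms, strong epimorphisms and epimorphisms all coincide there (Remark \ref{remark.proj.}), $eA$ is a generator if and only if for every right $A$-module $M$ the canonical counit morphism $(eA)^{(\Hom_A(eA,M))}\to M$ is an epimorphism. Using the natural isomorphism $\Hom_A(eA,M)\simeq Me$ from Proposition \ref{idempotent.}, this counit can be rewritten in terms of the evaluation-type map $(eA)^{(Me)}\to M$ sending the copy indexed by $m\in Me$ via $f_m:ea\mapsto m\cdot a$; concretely its image is the submodule of $M$ generated by $Me$, i.e. $\sum_{m\in Me} m\cdot A = (Me)A$.

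So the first step is to identify the image of the counit: $eA$ is a generator if and only if $(Me)A = M$ for every right $A$-module $M$. For the \emph{if} direction I would specialize to $M=A$ (regarded as a right $A$-module over itself). Here $Ae=\{a\in A: ae=a\}$ — wait, one must be careful: the relevant object is $Me$ with $M=A$, which is $Ae=\{ae:a\in A\}$ — and $(Ae)A = AeA$, so $(Me)A=M$ for $M=A$ reads $AeA=A$, i.e. $e$ is full. Thus if $eA$ is a generator, $e$ is full.

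For the \emph{only if} direction, suppose $e$ is full, $AeA=A$. I want to show $(Me)A=M$ for every right $A$-module $M$. Given $m\in M$, write $m = m\cdot \textsf{1}$; since $\textsf{1}\in A = AeA$, we can write $\textsf{1}=\bigvee_i a_i e b_i$ for suitable $a_i,b_i\in A$, so $m = \bigvee_i (m\cdot a_i)\cdot e \cdot b_i = \bigvee_i \big((m\cdot a_i)\cdot e\big)\cdot b_i$. Each $(m\cdot a_i)\cdot e$ lies in $Me$, hence the displayed supremum exhibits $m$ as an element of $(Me)A$. Therefore $(Me)A=M$, so the counit is epi for every $M$, and $eA$ is a generator.

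The main obstacle — really the only subtle point — is getting the bookkeeping right between $Me$ as a \emph{subset} of $M$ cut out by $-\cdot e=\textsf{id}$ versus as the \emph{image} of $-\cdot e$, and matching the counit of $\Hom_A(eA,-)\dashv (eA)^{(-)}$ with the generated-submodule map; both identifications are supplied by Subsection \ref{sub.idemp.} and Proposition \ref{idempotent.}, so once those are invoked carefully the argument is a short computation. A secondary point worth a sentence is justifying that "counit epi for all $M$" is genuinely equivalent to "$eA$ is a generator" rather than merely "strong/regular generator" — but this is exactly Remark \ref{remark.proj.}, which says all three notions coincide in $\W_A(\CC)$.
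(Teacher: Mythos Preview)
Your proposal is correct and follows essentially the same approach as the paper: reduce ``generator'' to surjectivity of a counit via Remark \ref{remark.proj.}, use Proposition \ref{idempotent.} to rewrite the counit in terms of $Me$, identify its image as $MeA$, specialize to $M=A$ for one direction, and use a decomposition $\textsf{1}=\bigvee_i a_i e b_i$ for the other. The only cosmetic differences are that the paper works with the $\CC$-enriched adjunction $-\otimes eA \dashv \Hom_A(eA,-)$ (counit $Me\otimes eA\to M$) rather than your $\Set$-based copower adjunction $(eA)^{(-)}\dashv \W_A(\CC)(eA,-)$, and that you have the ``if''/``only if'' labels swapped relative to the statement.
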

\begin{proof} Note first that by Remark \ref{remark.proj.}, the right $A$-module $eA$ is generator for $\W_A(\CC)$ if and only if
it is a strong generator, and hence if and only if the functor $\W_A(eA,-):\W_A(\CC)\to \Set$ (or, equivalently, the functor 
$\Hom_A(eA,-):\W_A(\CC)\to \CC$, since $U^A:\W_A(\CC) \to \CC$ is conservative) reflects isomorphisms. But since the latter functor 
admits the functor $-\ot eA :\CC \to \W_A(\CC)$ as a left adjoint, $\Hom_A(eA,-):\W_A(\CC)\to \CC$ reflects isomorphism if and only if each component
$\sigma_M$, $M \in \W_A(\CC)$, of the counut $\sigma$ of the adjunction $-\ot eA \dashv \Hom_A(eA,-)$, which is the
evaluation map $\Hom_A(eA, M)\ot eA \to M$, is an epimorphism (see \ref{generator.def.}). This is equally to require that  the composite 
\[\kappa_M:Me \ot eA \xr{\alpha^{-1}_{eA,M}\ot eA} \Hom_A(eA, M)\ot eA \xr{\sigma_M}M\] is an epimorphism.
Therefore, $eA$ is a generator for $\W_A(\CC)$ if and only if for each $M \in \W_A(\CC)$, the map $\kappa_M$, which is easily seen to 
take $\bigvee_{i \in I}(m_i \cdot e\ot e a_i)$ to $\bigvee_{i \in I}m_i \cdot (e a_i)$, is an epimorphism.

Suppose now that $eA$ is a generator for $\W_A (\CC)$. Then each $\kappa_M$, and in particular $\kappa_A:Ae \ot eA \to A$,
is an epimorphism. It follows --  since the image of $\kappa_A:Ae \ot eA \to A$  is $AeA$ -- that $AeA=A$, i.e., 
that $e$ is a full idempotent. Conversely, if $e$ is a full idempotent, then $AeA=A$ and therefore there are elements
$(a_i\in A)_{i \in I}$ and $(a'_i\in A)_{i \in I}$ such that $\bigvee_{i \in I}a_iea_i'=\textsf{1}$. Then we have for any 
$M \in \W_A(\CC)$ and any $m \in M$:  
\[\begin{split} \kappa_M(\bigvee_{i \in I}(m\cdot (a_ie) \ot ea_i'))&=\bigvee_{i \in I}(\kappa_M)(m\cdot (a_ie) \ot ea_i')\\
&=\bigvee_{i \in I}(m\cdot (a_iea_i'))\stackrel{(\ref{module})}=m\cdot \bigvee_{i \in I}(a_iea_i'))\\
& =m \cdot \textsf{1}\stackrel{(\ref{module})}=m,
\end{split}\] where the first equality holds because $\kappa_M$ is a morphism in $\CC$, and the second equation holds by 
definition of $\kappa_M$. It follows that $\kappa_M$ is an epimorphism for all right $A$-module $M$, implying
that $eA$ is a generator for $\W_A(\CC)$.
\end{proof}

\begin{theorem}\label{eAe} Let $A$ be a quantale and $e$ be a full idempotent in $A$. Then
the quantales $A$ and $eAe$ are Morita equivalent.
\end{theorem}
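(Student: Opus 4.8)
The plan is to exhibit $eA$, regarded as a right $A$-module, as a \emph{projective generator} of $\W_A(\CC)$ whose endomorphism quantale is isomorphic to $eAe$, and then to read off the statement from Theorem \ref{morita.3.}. So the whole argument is: $eA$ is a projective generator; its endomorphism quantale is $eAe$; hence $A$ and $eAe$ are Morita equivalent.

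First I would check that $eA$ is a generator of $\W_A(\CC)$: this is precisely Proposition \ref{generator.}, since $e$ being full means $AeA=A$. Next I would check that $eA$ is projective. By Subsection \ref{modules.}, $A$ itself, viewed as a right $A$-module, is the free module $F^A(\textsf{2})=\textsf{2}\ot A$, hence projective; and by the discussion of idempotents in Subsection \ref{sub.idemp.}, the factorization $A\xr{p_A}eA\xr{i_A}A$ of the idempotent endomorphism $e\cdot-\colon A\to A$ satisfies $p_A i_A=\textsf{id}_{eA}$, so that $eA$ is a retract of $A$. Since a retract of a (regular) projective object is again (regular) projective (Subsection \ref{projective.}), and projective objects in module categories over quantales coincide with the regular projective ones (Remark \ref{remark.proj.}), $eA$ is a projective generator of $\W_A(\CC)$.

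Now Theorem \ref{morita.3.}, applied to the projective generator $Q=eA$, gives that $A$ and $\Hom_A(eA,eA)$ are Morita equivalent, that is, there is an equivalence $\W_A(\CC)\simeq \W_{\Hom_A(eA,eA)}(\CC)$. By Remark \ref{eA} the map $\alpha_A\colon\Hom_A(eA,eA)\to eAe$ is an isomorphism of quantales, and any such isomorphism induces an equivalence $\W_{\Hom_A(eA,eA)}(\CC)\simeq \W_{eAe}(\CC)$ (exactly as in the final step of the proof of Theorem \ref{morita.2.}). Composing the two equivalences yields $\W_A(\CC)\simeq \W_{eAe}(\CC)$, i.e.\ $A$ and $eAe$ are Morita equivalent.

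There is no genuine obstacle: the statement is essentially a corollary of Theorem \ref{morita.3.} together with Proposition \ref{generator.} and Remark \ref{eA} (with Proposition \ref{idempotent.} underlying the computation of the endomorphism quantale in Remark \ref{eA}). The only points requiring a routine verification are the retraction identity $p_A i_A=\textsf{id}_{eA}$, which is already recorded in Subsection \ref{sub.idemp.}, and the elementary fact, used before, that an isomorphism of quantales induces an equivalence of the corresponding module categories.
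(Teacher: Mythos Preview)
Your proof is correct and follows essentially the same route as the paper: show that $eA$ is projective (as a retract of the free module $A$), a generator (by Proposition \ref{generator.}, using fullness of $e$), then invoke Theorem \ref{morita.3.} and the quantale isomorphism $\Hom_A(eA,eA)\simeq eAe$ from Remark \ref{eA}. The only cosmetic differences are the order in which projectivity and generation are verified and that you spell out the final passage from an isomorphism of quantales to Morita equivalence, which the paper leaves implicit.
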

\begin{proof} Since the right $A$-module $eA$ is a splitting object of the idempotent morphism $e\cdot -$, $eA$ is a retract of
$A$ in $\W_A(\CC)$ and thus is projective (e.g.,  \cite[Proposition 1, page 11]{JT}). Moreover, since $e$ is assumed
to be a full idempotent, $eA$ is generator for $\W_A(\CC)$, by Proposition \ref{generator.}.  It  then follows form 
Theorem \ref{morita.3.} that $A$ is Morita equivalent to the quantale $\Hom_A(eA,eA)$. But, by Proposition \ref{idempotent.},
the latter quantale is isomorphic to $eAe$. As a consequence, the quantales $A$ and $eAe$ are Morita equivalent.
\end{proof}

\
\
\

\section{Morita equivalence in terms of matrices}

In this section, we prove a Morita type theorem for quantales in terms of matrices.

\begin{thm}\label{matrices.} {\bf Matrices with values in a quantale.} \em Let $A$ be an arbitrary quantale. 
If $X$ and $Y$ are sets, an \emph{A-matrix} $\textsf{a}$  of size $X\times Y$ is a family $(\textsf{a}(x, y))_{(x,y)
\in X\times Y}$ of elements of $A$. Note that $A$-matrices of size  $X\times Y$ might be equally seen as  maps
$X\times Y \to A$. If $\textsf{a}$ and $\textsf{b}$ are two $A$-matrices with size $X\times Y$ and
$Y\times Z$, then their product $\textsf{a} \star \textsf{b}$ has size  $X\times Z$ and is defined by 
\[(\textsf{a} \star \textsf{b})(x,z)=\bigvee_{y \in X} \textsf{a}(x,y)\textsf{b}(y,z).\] For any set $X$, we write $\textsf{i}_X$ for the
matrix of size $X\times X$ given by $$\textsf{i}_X(x,x')=\begin{cases}
1, ~x=x'\\ 0, ~x\neq x'
\end{cases}$$ $\textsf{i}_X$ serves as the identity for the matrix multiplication. We write $\Mat_{X\times Y}(A)$ for the set
of all $A$-matrices of size $X\times Y$. $\Mat_{X\times Y}(A)$ is a sup-lattice with operations formed point-wise from those in $A$.
When $X=Y$, we shall just write $\Mat_{X}(A)$ instead of $\Mat_{X\times X}(A)$. It is routine to check that
the triple $(\Mat_{X}(A), \star, \textsf{i}_X)$ is a quantale.

For any set $X$, the right free $A$-module on $X$ has the form $A^{(X)}$ (see, for instance, \cite{JT}), which 
is isomorphic to $A^{X}$, since coproducts and products are isomorphic in $\W_A(\CC)$. Then identifying $A^{X}$ with the 
set of mappings $X\times \textsf{t}\to A$ (recall that $\textsf{t}$ is a terminal object in $\Set$), we have for any 
$\textsf{a}\in \Mat_{X}(A)$ a morphism $\textsf{a}\cdot -:A^{(X)} \to A^{(X)}$ of right $A$-modules
given by  \[(\textsf{a} \cdot f)_{x}= \bigvee_{x'\in X }\textsf{a}(x,x')f(x'),\] and the assignment 
$\textsf{a} \longmapsto \textsf{a} \cdot -$
yields an isomorphism  \[j_X:\Mat_{X}(A) \to \Hom_A(A^{(X)}, A^{(X)})\]
of quantales. Note that $j_X$ is the inverse of the isomorphism 
\[\Hom_A(A^{(X)}, A^{(X)})\simeq A^{(X\times X)}\simeq A^{X\times X}=\Mat_{X}(A)\] of sup-lattices.

\begin{theorem}\label{free.} Let $A$ be a quantale. For any set $X$, $A$ and $\emph{\Mat}_{X}(A)$ are Morita equivalent 
quantales.
\end{theorem}
\begin{proof} The result follows from Theorem \ref{morita.2.} by noting that $A^{(X)}$ is a free projective 
generator for $\W_A$ and that $j_X$ is an isomorphism of quantales.
\end{proof}

For an idempotent matrix  $\textsf{a} \in \Mat_{X}(A)$, we shall write $\textsf{a} A^{(X)}$ for the splitting object of the idempotent morphism
$\textsf{a} \cdot -: A^{(X)} \to A^{(X)}$ of right $A$-modules.

\begin{lemma}\label{iso.} Let $A$ be a quantale. For any set $X$ and any idempotent matrix $\emph{\textsf{a}} \in \emph{\Mat}_{X}(A)$,
there are isomorphisms  \[\emph{\Hom}_A( \emph{\textsf{a}} A^{(X)}, A^{(X)}) \simeq \emph{\Mat}_{X}(A) \emph{\textsf{a}}\] and
\[\emph{\Hom}_A( \emph{\textsf{a}} A^{(X)}, \emph{\textsf{a}} A^{(X)}) \simeq \emph{\textsf{a}} \,\emph{\Mat}_{X}(A) \emph{\textsf{a}}\]
of sup-lattices.
\end{lemma}
\begin{proof} As any functor preserves idempotents and their splittings, by applying the functor $\Hom_A(-, A^{(X)}):\W_A(\CC)\to \CC^{op}$
to the factorization $A^{(X)} \xr{p} \A A^{(X)} \xr{i} A^{(X)}$ of the idempotent morphism $\textsf{a}\cdot- :A^{(X)} \to A^{(X)}$
of right $A$-modules, we obtain an idempotent morphism \[\Hom_A(\textsf{a}\cdot-, A^{(X)}) :  \Hom_A(A^{(X)}, A^{(X)})\to  \Hom_A(A^{(X)}, A^{(X)})\]
in $\CC$, split by  $\Hom_A(i, A^{(X)})$ and  $\Hom_A(p, A^{(X)})$. An easy straightforward calculation shows
that the rectangle in the diagram
\[\xymatrix @R=28pt@C=29pt{ \Mat_{X}(A) \ar[d]_{j_X}\ar[rr]^-{-\star \textsf{a}}&&\Mat_{X}(A) \ar[d]^{j_X}\\
\Hom_A(A^{(X)}, A^{(X)}) \ar[rd]_-{\Hom_A(i, A^{(X)})} \ar[rr]^-{\Hom_A(\textsf{a} \,\cdot -, \,\,A^{(X)})}&& \Hom_A(A^{(X)}, A^{(X)}) \\
& \Hom_A( \textsf{a} A^{(X)}, A^{(X)}) \ar[ru]_-{\Hom_A(p, A^{(X)})}&}\] commutes and therefore the whole diagram also commutes.
It follows -- since $j_X$ is an isomorphism -- that $\Hom_A( \textsf{a} A^{(X)}, A^{(X)})$ is a splitting object of the idempotent morphism
$-\star \textsf{a}: \Mat_{X}(A) \to \Mat_{X}(A)$. But since a splitting object is  unique up to isomorphism, and since a splitting object of
$-\star \textsf{a}$ is $\Mat_{X}(A)\textsf{a}$, it follows that $\Hom_A( \textsf{a} A^{(X)}, A^{(X)})$ and  $\Mat_{X}(A)\textsf{a}$ are isomorphic.

The second isomorphism can be established by applying similar arguments as above to the commutative diagram
\[\xymatrix @R=28pt@C=29pt{ \Mat_{X}(A) \ar[d]_{j_X}\ar[rr]^-{\textsf{a} \,\star -\star\,\textsf{a}}&&\Mat_{X}(A) \ar[d]^{j_X}\\
\Hom_A(A^{(X)}, A^{(X)}) \ar[rd]_-{\Hom_A(i, q)} \ar[rr]^-{\Hom_A(\textsf{a} \,\cdot -, \,\textsf{a}\,\cdot -)}&& \Hom_A(A^{(X)}, A^{(X)}) \\
& \Hom_A( \textsf{a} A^{(X)}, \textsf{a} A^{(X)}) \ar[ru]_-{\Hom_A(q, i)}&}\]
\end{proof}

\begin{theorem}\label{m.full.idemp.} Two quantales $A$ and $B$ are Morita equivalent if and only if there exist a set $X$ and
a full idempotent $\emph{\textsf{a}} \in \emph{\Mat}_{X}(A)$ such that $B\simeq \emph{\textsf{a}}\, \emph{\Mat}_{X}(A)\, 
\emph{\textsf{a}}$ as quantales.
\end{theorem}
\begin{proof} If $\textsf{a} \in \Mat_{X}(A)$ is a full idempotent such that the quantales $B$ and $\textsf{a}\,\Mat_{X}(A)\,\textsf{a}$ 
are isomorphic, then $\Mat_{X}(A)$ and $\textsf{a}\,\Mat_{X}(A)\,\textsf{a}$ are Morita equivalent quantales by Theorem \ref{eAe}, 
while $A$ and $\Mat_{X}(A)$ are equivalent by Theorem \ref{free.}. Therefore, that the quantales $A$ and $\textsf{a}\,\Mat_{X}(A)\,\textsf{a}$ 
are Morita equivalent.

Conversely, suppose that $A$ and $B$ are Morita equivalent quantales. Then, by Theorem \ref{morita.2.}, there exists a projective
generator $Q \in \W_A(\CC)$ such that $\Hom_A(Q,Q)\simeq B$. Since $Q$ is projective in $\W_A(\CC)$, there exists a set $X$ such that
$Q$ is retract of the right $A$-modules $A^{(X)}$ (see, for example, \cite[Proposition 1, page 11]{JT}). In other words,
there exists an idempotent $e\in \Hom_A(A^{(X)}, A^{(X)})$ such that $Q$ is the splitting object of $e$. Put $\textsf{a}=j^{-1}_X(e)$.
Then $\textsf{a} \in \Mat_{X}(A)$ is an idempotent matrix and $e$ can be identified with the idempotent endomorphism 
$\textsf{a} \cdot -:A^{(X)}\to A^{(X)}$. It follows that $Q$ is isomorphic to the splitting object of $\textsf{a}\cdot -$, which is 
$\textsf{a} A^{(X)}$. Since  $A$ and $B$ are Morita equivalent quantales by hypothesis, $B\simeq \Hom_A(Q,Q)$. But $\Hom_A(Q,Q)\simeq 
\Hom_A(\textsf{a} A^{(X)},\textsf{a} A^{(X)})$ and since $\Hom_A(\textsf{a} A^{(X)}, \textsf{a}A^X)\simeq \textsf{a}\,\Mat_{X}(A)\textsf{a}$ 
by Lemma \ref{iso.}, it follows that $B\simeq \textsf{a}\,\Mat_{X}(A)\textsf{a}$; so it remains to show that $\textsf{a} \in \Mat_{X}(A)$ 
is a full idempotent. Since $A^{(X)}$ is a retract of $A^{(X\times X)}$ in $\W_A(\CC)$, $\textsf{a} A^{(X)}$ is a retract of 
$\textsf{a} A^{(X\times X)}$ in $\W_A(\CC)$. As $\textsf{a} A^{(X)}$ is a generator for $\W_A(\CC)$, it follows that $\textsf{a} A^{(X\times X)}$ 
is also a generator for $\W_A(\CC)$ (see \ref{generator.def.}) and hence the functor
$\Hom_A(\textsf{a}A^{(X\times X)},- ):\W_A(\CC) \to \CC$ is conservative. Then each component of the counit of the adjunction
$ \Hom_A(\textsf{a} A^{(X\times X)},- ) \dashv -\ot \textsf{a} A^{(X\times X)}$ is epimorphic. In particular, the $A^{(X\times X)}$-component
of the counit, which is the evaluation map \[\Hom_A(\textsf{a} A^{(X\times X)},A^{(X\times X)})\ot \textsf{a} A^{(X\times X)}  \to A^{(X\times X)},\]
is an epimorphism. But since $\Hom_A(\textsf{a}A^{(X\times X)}, \,\,A^{(X\times X)})\simeq A^{(X\times X)} \textsf{a}$ by Lemma \ref{iso.}, the $A^{(X\times X)}$-component
is an epimorphism if and only if the morphism \[A^{(X\times X)} \textsf{a} \ot \textsf{a} A^{(X\times X)}  \to A^{(X\times X)},\] or, equivalently, the
morphism \[\Mat_{X}(A)\textsf{a} \ot \textsf{a}\Mat_{X}(A) \to \Mat_{X}(A)\]
given by the matrix multiplication, is an epimorphism. And this is the case if and only if the image of the last morphism,
which is $\Mat_{X}(A)\,\textsf{a} \,\Mat_{X}(A)$, is isomorphic to $\Mat_{X}(A)$, i.e., if and only if $\textsf{a}$ is a full idempotent. 
This completes the proof of the theorem.
\end{proof}

\end{thm}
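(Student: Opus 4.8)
The plan is to reduce both isomorphisms to the uniqueness of idempotent splittings (Subsection \ref{Cauchy}), exploiting that the quantale isomorphism $j_X:\Mat_X(A)\to\Hom_A(A^{(X)},A^{(X)})$ of Subsection \ref{matrices.} carries matrix multiplication $\star$ to composition of endomorphisms, and that $\CC$ is Cauchy complete, so every idempotent in $\CC$ splits with a splitting object unique up to isomorphism. First I would fix the splitting $A^{(X)}\xr{p}\textsf{a}A^{(X)}\xr{i}A^{(X)}$ of the idempotent morphism $\textsf{a}\cdot-=j_X(\textsf{a})$, so that $ip=\textsf{a}\cdot-$ and $pi=\textsf{id}$.

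For the first isomorphism I would apply the contravariant hom-functor $\Hom_A(-,A^{(X)}):\W_A(\CC)\to\CC^{\text{op}}$ to this splitting. Since any functor preserves idempotents together with their splittings, the resulting idempotent $\Hom_A(\textsf{a}\cdot-,A^{(X)})$ on $\Hom_A(A^{(X)},A^{(X)})$ splits through $\Hom_A(\textsf{a}A^{(X)},A^{(X)})$ via $\Hom_A(i,A^{(X)})$ and $\Hom_A(p,A^{(X)})$. The crucial computation is that, transported along $j_X$, the endomorphism $\Hom_A(\textsf{a}\cdot-,A^{(X)})$ (precomposition with $\textsf{a}\cdot-$) becomes right multiplication $-\star\textsf{a}:\Mat_X(A)\to\Mat_X(A)$, since $j_X(\textsf{b})\circ j_X(\textsf{a})=j_X(\textsf{b}\star\textsf{a})$; this is precisely the commutativity of the first rectangle. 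As the splitting object of the idempotent $-\star\textsf{a}$ is its image $\Mat_X(A)\textsf{a}$, uniqueness of splitting objects forces $\Hom_A(\textsf{a}A^{(X)},A^{(X)})\simeq\Mat_X(A)\textsf{a}$.

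For the second isomorphism I would run the same argument two-sidedly. The object $\Hom_A(\textsf{a}A^{(X)},\textsf{a}A^{(X)})$ is a retract of $\Hom_A(A^{(X)},A^{(X)})$ with section $\Hom_A(p,i):h\mapsto i\circ h\circ p$ and retraction $\Hom_A(i,p):g\mapsto p\circ g\circ i$, and $\Hom_A(i,p)\circ\Hom_A(p,i)=\textsf{id}$ follows from $pi=\textsf{id}$. Hence $\Hom_A(\textsf{a}A^{(X)},\textsf{a}A^{(X)})$ is the splitting object of the idempotent $g\mapsto(\textsf{a}\cdot-)\circ g\circ(\textsf{a}\cdot-)$ on $\Hom_A(A^{(X)},A^{(X)})$, which along $j_X$ corresponds to $\textsf{a}\star-\star\textsf{a}$, whose splitting object is $\textsf{a}\,\Mat_X(A)\,\textsf{a}$. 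Uniqueness again yields $\Hom_A(\textsf{a}A^{(X)},\textsf{a}A^{(X)})\simeq\textsf{a}\,\Mat_X(A)\,\textsf{a}$.

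I expect the main obstacle to be bookkeeping rather than conceptual: checking, against the explicit formula $(\textsf{a}\cdot f)_x=\bigvee_{x'}\textsf{a}(x,x')f(x')$ and the definition of $j_X$, that precomposition by $\textsf{a}\cdot-$ really translates to $-\star\textsf{a}$ (and the two-sided version to $\textsf{a}\star-\star\textsf{a}$) in the correct order, and that the images of these idempotent self-maps of $\Mat_X(A)$ are indeed $\Mat_X(A)\textsf{a}$ and $\textsf{a}\,\Mat_X(A)\,\textsf{a}$. Once the two squares are seen to commute, the conclusion is formal from Cauchy completeness of $\CC$ and the uniqueness of splittings.
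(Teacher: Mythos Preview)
Your proposal is correct and follows essentially the same argument as the paper: both apply the hom-functor to the splitting $A^{(X)}\xr{p}\textsf{a}A^{(X)}\xr{i}A^{(X)}$, transport the resulting idempotent along the quantale isomorphism $j_X$ to identify it with $-\star\textsf{a}$ (respectively $\textsf{a}\star-\star\textsf{a}$), and then invoke uniqueness of splitting objects to conclude. Your write-up is in fact slightly more explicit than the paper's in verifying that $\Hom_A(p,i)$ and $\Hom_A(i,p)$ are the section and retraction exhibiting $\Hom_A(\textsf{a}A^{(X)},\textsf{a}A^{(X)})$ as the splitting object, but the strategy is identical.
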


\section{Some applications: Sup-lattices in Grothendieck toposes}

In this section, we deal with internal sup-lattices in Grothendieck toposes. It is well known that sup-lattices can be defined 
in any elementary topos (e.g., \cite{JT}).

\begin{thm} \label{topos.}{\bf Sup-lattices in elementary toposes.}  \em Let $\E$ be an elementary topos (\cite{JP}) with 
the subobject classifier $\Omega$, and let $\CC(\E)$ denote the category of internal sup-lattices (i.e. internally complete
lattices and sup-preserving morphisms) in $\E$. It is well known (see, \cite{JT}) that $\CC(\E)$ is monadic over $\E$.  The 
covariant power-set functor $(X \to \Omega^X, \,  f \to \exists_f)$ (e.g., \cite{JP}) is a monad on $\E$ whose unit is the 
inclusions of singletons $ \{\}_X\colon X \to \Omega^X$, and whose multiplication $\Omega^{\Omega^X} \to \Omega^X$
the internal union, respectively. This monad is denoted by $\mP$. An object $X \in \E$ is an $\mP$-algebra precisely if $X$ 
is an internal sup-lattice in $\E$, and thus $\CC(\E)$, being (isomorphic to) the category of $\mP$-algebras, is monadic over 
$\E$. The free $\mP$-algebra functor $F^\mP :\E \to \CC(\E)$ takes an object $X \in \E$ to $\mP(X)$ and therefore, for all
$X \in \E$ and $S \in \CC(\E)$, we have a bijection 
\begin{equation}\label{topos.eq.}\CC(\E) (\mP(X), S)\simeq \E(X, S).\end{equation}
Since the forgetful functor $ \CC(\E) \to \E$ takes epimorphisms to split epimorphisms \cite{JT}, Remark \ref{remark.proj.}
applies also to internal sup-lattices. In particular, generators are regular (and hence also strong) generators in $\CC(\E)$.
Moreover, the free $\mP$-algebras are projective in $\CC(\E)$ (\cite{V}). Observe next that since $\CC(\E)$ is monadic on $\E$, 
all small limits (and hence all small colimits) exist in $\CC(\E)$ provided they exist in $\E$. Therefore, if $\E$ is a Grothendieck 
topos, $\CC(\E)$  is complete and cocomplete. Finally, recall that $\CC(\E)$ is Barr-exact (e.g., \cite{V}).
\end{thm}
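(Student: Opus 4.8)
The statement collects several background facts about the category $\CC(\E)$ of internal sup-lattices, and the plan is to derive each of them, most from standard topos theory combined with the monad-theoretic machinery already recorded. First I would recall the construction of the power-object monad $\mP$ on $\E$: the functor sends $X$ to its power object $\Omega^X$ and a morphism $f$ to the direct-image map $\exists_f$, the unit is the singleton $\{\}_X\colon X\to\Omega^X$, and the multiplication is the internal-union map $\Omega^{\Omega^X}\to\Omega^X$. That these data satisfy the monad axioms is a bookkeeping verification in the internal logic of $\E$ (equivalently, the classical fact of \cite{JT, JP} that $\mP$ is a monad). The crucial identification is that a $\mP$-algebra structure $\Omega^X\to X$ is exactly the choice of an internal supremum operation, so that $\mP$-algebras coincide with internal sup-lattices and $\CC(\E)\cong\E^{\mP}$; monadicity of $\CC(\E)$ over $\E$ is then immediate, the free functor $F^{\mP}$ sends $X$ to $\mP(X)$, and the adjunction bijection (\ref{topos.eq.}) is merely the free--forgetful adjunction $F^{\mP}\dashv U^{\mP}$.

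Next I would transfer the epi/projective/generator bookkeeping. The key input of \cite{JT} is that the forgetful functor $U^{\mP}\colon\CC(\E)\to\E$ sends epimorphisms to \emph{split} epimorphisms. Running the argument of Remark \ref{remark.proj.} verbatim in this setting, every epimorphism in $\CC(\E)$ is regular, so regular epimorphisms, strong epimorphisms and epimorphisms coincide there; consequently projective objects coincide with regular projectives, and generators coincide with strong and with regular generators. Projectivity of the free algebras then follows cleanly: for $X\in\E$ the composite isomorphism $\CC(\E)(\mP(X),-)\cong\E(X,U^{\mP}(-))$ shows that, given an epimorphism $S\to S'$ in $\CC(\E)$, its image $U^{\mP}(S)\to U^{\mP}(S')$ is a split epimorphism in $\E$, hence is carried to a surjection by $\E(X,-)$; thus $\CC(\E)(\mP(X),-)$ preserves epimorphisms and $\mP(X)$ is projective, which is the content cited to \cite{V}.

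For the (co)completeness clause I would argue as follows. Since $\CC(\E)$ is monadic over $\E$, the forgetful functor creates all limits, so $\CC(\E)$ has every small limit that $\E$ has; when $\E$ is a Grothendieck topos it is complete, and therefore $\CC(\E)$ is complete. Cocompleteness is then obtained for free from the self-duality of sup-lattices: sending a sup-lattice to its order-opposite and a sup-preserving map to the right adjoint it necessarily possesses yields an equivalence $\CC(\E)^{\text{op}}\simeq\CC(\E)$ (\cite{JT}), under which limits and colimits are interchanged; hence completeness of $\CC(\E)$ forces cocompleteness, and for a Grothendieck topos $\CC(\E)$ is both complete and cocomplete.

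Finally, Barr-exactness of $\CC(\E)$ is the one assertion that does not drop out of formal monad theory, since categories of algebras over an exact base need not be exact without further hypotheses on the monad; here I would simply invoke \cite{V}, where this is established for the power-object monad over a topos. I expect this to be the main obstacle, in the sense that it is the only step requiring genuine input beyond the internal logic and the formal adjunction/duality arguments above; the identification $\CC(\E)\cong\E^{\mP}$ is the second nontrivial ingredient, but it is classical (\cite{JT}). Assembling these pieces establishes every clause of the statement.
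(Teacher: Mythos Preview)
Your proposal is correct. Note, however, that in the paper this item is not a theorem with a proof but an expository paragraph: each clause is simply asserted with a citation (to \cite{JT}, \cite{JP}, \cite{V}) and no argument is supplied. Your write-up therefore goes strictly beyond what the paper does, supplying the actual reasoning behind the citations.

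The one place where your argument and the paper's hint diverge is cocompleteness. The paper writes ``all small limits (and hence all small colimits) exist in $\CC(\E)$ provided they exist in $\E$'', leaving the ``hence'' unexplained; your route via the self-duality $\CC(\E)^{\mathrm{op}}\simeq\CC(\E)$ from \cite{JT} is a clean and correct way to justify that step, and arguably clearer than what the paper gestures at. Everything else---the identification $\CC(\E)\cong\E^{\mP}$, the free--forgetful bijection, the transfer of Remark~\ref{remark.proj.}, projectivity of free algebras, and the appeal to \cite{V} for Barr-exactness---matches the paper's intended content exactly.
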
 

\begin{thm} \label{relations.}{\bf Relations in elementary toposes.}  \em
Recall that for objects $X$ and $Y$ in any category $\E$ with finite products, a \emph{relation} from $X$ to $Y$ is a subobject
of $X \times Y$. It is well known that, in any Barr-exact category relations can be composed using pullbacks and image factorizations.
The identity on $X$ for this composition is given by the diagonal morphism $X \to X \times X$. Now if $\mathcal E$ is a Grothendieck
topos, then for any object $X \in \E$, the set $\textsf{Sub}(X)$ of subobjects of $X$ is a complete lattice, i.e., a sup-lattice:
the supremum of subobjects $X_i \rightarrowtail X$ in $\E$ can be calculated by taking the image of the induced morphism $\oplus X_i 
\to X$ (see, for example, \cite{MclM}). Consequently, the set $\textsf{Sub}(X \times X)$ of relations on $X$  with its subobject 
ordering and relational composition is a quantale.

We record the following lemma, which we shall need. Recall that a \emph{bound} of a Grothendieck topos $\E$ is an object whose subobjects 
form a generating family for $\E$.

\begin{lemma}\label{bound.g.} Let $B$ be a bound of a Grothendieck topos $\E$. Then $\mP(B)=\Omega^B$
is a generator for $\CC(\E)$.
\end{lemma}
\begin{proof} Note first that since $B$ is a bound of $\E$, its subobjects $\{k_i:B_i \to B, i \in I\}$ generate $\E$. Since the forgetful 
functor  $\CC(\E) \to \E$ is evidently faithful and the functor $F^{\mathcal E}=\Omega^{-}$ is a left adjoint to it, we conclude (for example, 
from \cite[Chapter 5, Proposition 1.5]{BMit}) that  the  family  $\{\Omega^{B_i}, i \in I\}$ is  generating for $\CC(\mathcal E)$. Next, for 
each $i \in I$, the (mono)morphism $k_i:B_i \to B$, like any other morphism in $\E$, yields morphisms $\mP(k_i)=\Omega^{k_i}:\mP(B) \to \mP(B_i)$ 
and $\forall_{k_i}:\mP(B_i) \to \mP(B)$ such that $\mP(k_i)$ is an internal right adjoint of $\exists_{k_i}$ (e.g., \cite[Theorem IV.9.2]{MclM}) 
and  $\forall_{k_i}$ is an internal right adjoint of $\mP(k_i)$ (e.g., \cite[Propositions IV.9.4]{MclM}). Since internal left adjoints are 
supremum-preserving morphisms (see, for example, \cite{KLM}), it follows that $\mP(k_i)$ is a morphism of internal sup-lattices. But since $k_i:B_i 
\to B$ is a monomorphism, $\mP(k_i) \circ \exists_{k_i}=\textsf{id}_{\Omega^{B_i}}$ by \cite[Corollary 1.33]{JP}, and hence each $\Omega^{B_i}$ is 
a retract  of $\Omega^{B}$ in $\CC(\E)$. Then $\Omega^{B}$ is a generator for $\CC(\E)$. For if $h:\Omega^{B_i}\to X $ separates two different 
morphisms  $f,g: X\rightrightarrows Y$, then the compopsite $h \circ \mP(k_i)$ also separates $f$ and $g$, since $\mP(k_i)$ is a (split) epimorphism.
\end{proof}

Now we are ready to prove the following result, which gives a characterization of internal sup-lattices in a Grothendieck topos in terms of external 
modules over an external quantale.

\begin{theorem}\label{pitts.} Let $B$ a bound of a Grothendieck topos $\mathcal E$. Then there is an equivalence of categories \[\CC(\mathcal E) \simeq \emph{\W}_{\emph{\textsf{Sub}}(B \times B)}(\CC).\]
\end{theorem}
\begin{proof} As mentioned above, $\CC(\E)$ is Barr-exact. Since $\mathcal E$ is a Grothendieck topos, $\CC (\mathcal E)$ is enriched in $\CC$ (see 
\cite{Pit}). Next,  $\mP(B)=\Omega^B$, being a free internal semi-lattice, is projective in $\CC(\E)$; moreover, it is a generator for $\CC(\E)$ by 
Lemma \ref{bound.g.}. Applying Theorem \ref{sup.l.th.1}, we conclude that the functor \[\CC(\E)(\mP(B),-):\CC(\E)\to \W_{\CC(\mathcal E)(\mathcal P(B), 
\mathcal P(B))}(\CC)\] is an equivalence of categories. The desired result now follows from the fact that there is an isomorphism $\CC(\mathcal E)
(\mathcal P(B), \mathcal P(B))\simeq \textsf{Sub}(B\times B)$ of quantales, as can be seen from the following sequence of natural isomorphisms of 
sup-lattices: \[\CC(\mathcal E)(\mathcal P(B), \mathcal P(B))\stackrel{(\ref{topos.eq.})}\simeq \mathcal E(B, \mathcal P(B))=\mathcal E(B, \Omega^B)\simeq
\mathcal E(B\times B, \Omega)\simeq\textsf{Sub}(B\times B).\]  Here the second isomorphism comes from the exponential adjunction and  the third from the definition of $\Omega$.
\end{proof}
\end{thm}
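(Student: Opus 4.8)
The plan is to apply Theorem~\ref{sup.l.th.1} to the quantaloid $\CC(\mathcal E)$, using $\mathcal P(B)=\Omega^B$ as the candidate regular projective generator, and then to identify the resulting endomorphism quantale with $\textsf{Sub}(B\times B)$. First I would assemble the hypotheses of condition~(iii) of Theorem~\ref{sup.l.th.1}: that $\CC(\mathcal E)$ is a quantaloid (i.e.\ $\CC$-enriched), that it is Barr-exact, that $\mathcal P(B)$ is a regular projective generator, and that $\CC(\mathcal E)$ admits arbitrary small copowers of $\mathcal P(B)$. Enrichment in $\CC$ for a Grothendieck topos is cited from Pitts; Barr-exactness of $\CC(\mathcal E)$ is recalled in Subsection~\ref{topos.}; cocompleteness of $\CC(\mathcal E)$ (which in particular gives all small copowers) also follows from Subsection~\ref{topos.} since $\mathcal E$ is a Grothendieck topos and $\CC(\mathcal E)$ is monadic over it. Projectivity of $\mathcal P(B)=\Omega^B$ holds because it is a free $\mP$-algebra and free $\mP$-algebras are projective in $\CC(\mathcal E)$ (Subsection~\ref{topos.}); that $\mathcal P(B)$ is a generator is precisely the content of Lemma~\ref{bound.g.}, which uses that $B$ is a bound of $\mathcal E$. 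By Remark~\ref{remark.proj.} (which applies to $\CC(\mathcal E)$ since the forgetful functor to $\mathcal E$ sends epimorphisms to split epimorphisms), being a generator and being projective automatically upgrade to being a \emph{regular} generator and \emph{regular} projective, so condition~(iii) is satisfied.

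With condition~(iii) in hand, Theorem~\ref{sup.l.th.1} yields an equivalence $\CC(\mathcal E)\simeq \W_E(\CC)$ with $E=\Hom_{\CC(\mathcal E)}(\mathcal P(B),\mathcal P(B))=\CC(\mathcal E)(\mathcal P(B),\mathcal P(B))$, the endomorphism quantale of $\mathcal P(B)$ in $\CC(\mathcal E)$. The remaining task is purely computational: to exhibit an isomorphism of quantales $\CC(\mathcal E)(\mathcal P(B),\mathcal P(B))\simeq \textsf{Sub}(B\times B)$. I would do this by chaining natural isomorphisms of sup-lattices: the free--forgetful adjunction bijection~(\ref{topos.eq.}) gives $\CC(\mathcal E)(\mathcal P(B),\mathcal P(B))\simeq \mathcal E(B,\mathcal P(B))=\mathcal E(B,\Omega^B)$; the exponential adjunction in the topos $\mathcal E$ gives $\mathcal E(B,\Omega^B)\simeq \mathcal E(B\times B,\Omega)$; and the defining universal property of the subobject classifier gives $\mathcal E(B\times B,\Omega)\simeq \textsf{Sub}(B\times B)$. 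Each of these is already a sup-lattice isomorphism, so composing them identifies the two underlying sup-lattices.

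The main obstacle — and the part that needs genuine care rather than invocation — is checking that this composite isomorphism respects the \emph{quantale} (monoid) structures, i.e.\ that composition of endomorphisms of $\mathcal P(B)$ on the left corresponds to relational composition of subobjects of $B\times B$ on the right, and that the identity endomorphism corresponds to the diagonal $B\rightarrowtail B\times B$. This is the heart of the matter: one must trace how the internal-union multiplication of the power-object monad $\mP$ interacts with the transpositions above so that composing two $\mP$-algebra morphisms $\mathcal P(B)\to\mathcal P(B)$ is computed, under the identification with $\textsf{Sub}(B\times B)$, by pullback-and-image, which is exactly relational composition in the Barr-exact topos $\mathcal E$ (as recalled in Subsection~\ref{relations.}). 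I expect this to reduce to a standard Kleisli-category computation for the power-object monad: morphisms $\mathcal P(B)\to\mathcal P(B)$ of $\mP$-algebras are the same as Kleisli morphisms $B\to B$ for $\mP$, which are the same as relations $B\rightsquigarrow B$, and Kleisli composition for $\mP$ is well known to be relational composition. Since the paper presents the final isomorphism simply via the displayed chain of natural isomorphisms, I would present the compatibility with multiplication as a routine (if slightly tedious) diagram chase, flagging that it amounts to the identification of the Kleisli category of $\mP$ on $\mathcal E$ with the category of relations, and conclude — using that $\CC(\textsf{2},-):\CC\to\Set$ (equivalently, the forgetful functor to $\mathcal E$) is conservative on morphisms — that the sup-lattice isomorphism is in fact an isomorphism of quantales, whence $\CC(\mathcal E)\simeq \W_{\textsf{Sub}(B\times B)}(\CC)$.
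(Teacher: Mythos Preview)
Your proposal is correct and follows essentially the same route as the paper: verify the hypotheses of Theorem~\ref{sup.l.th.1}(iii) for $\CC(\mathcal E)$ with $Q=\mathcal P(B)$, then identify the endomorphism quantale with $\textsf{Sub}(B\times B)$ via the free--forgetful, exponential, and subobject-classifier bijections. You are in fact more careful than the paper on two points it leaves implicit: you explicitly invoke cocompleteness to get copowers and Remark~\ref{remark.proj.} to upgrade ``projective generator'' to ``regular projective generator'', and you flag that the multiplicativity of the final isomorphism amounts to the identification of the Kleisli category of $\mP$ with the category of relations---a verification the paper simply asserts.
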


\begin{remark} \em This result should be compared with \cite[Theorem 5.2]{Pit} which gives a different description of the category of internal sup-lattices
in Grothendieck toposes.
\end{remark}

If a bound $B$ is a subquotient  of an object $B'$ (i.e. if $B$ is a subobject of a quotient of $B'$), 
then $B'$ is also a bound. Hence Grothendieck toposes have infinitely many bounds. So we have infinitely many choices for $B$
in Theorem \ref{pitts.}. Nevertheless, as the following result asserts, their corresponding quantales $\textsf{Sub}(B \times B)$ 
are unique up to Morita equivalence.

\begin{theorem} If $B$ and $B'$ both are bounds of a Grothendieck topos $\mathcal E$, then the quantales $\emph{\textsf{Sub}}(B\times B)$
and $\emph{\textsf{Sub}}(B'\times B')$ are Morita equivalent. 
\end{theorem}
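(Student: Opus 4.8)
The plan is to read this off directly from Theorem~\ref{pitts.}. Since $B$ is a bound of $\mathcal E$, that theorem provides an equivalence of categories $\CC(\mathcal E)\simeq \W_{\textsf{Sub}(B\times B)}(\CC)$; since $B'$ is likewise a bound of $\mathcal E$, applying the same theorem with $B'$ in place of $B$ gives an equivalence $\CC(\mathcal E)\simeq \W_{\textsf{Sub}(B'\times B')}(\CC)$. First I would fix a quasi-inverse of the second equivalence and compose it with the first, obtaining an equivalence of categories
\[\W_{\textsf{Sub}(B\times B)}(\CC)\simeq \W_{\textsf{Sub}(B'\times B')}(\CC).\]
Since two quantales are Morita equivalent precisely when their categories of right modules are equivalent, this is exactly the desired conclusion.

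I do not anticipate any real obstacle: the entire content has already been absorbed into Theorem~\ref{pitts.} (and, behind it, into Lemma~\ref{bound.g.} and Theorem~\ref{sup.l.th.1}), and the present statement merely records that the intermediate category $\CC(\mathcal E)$ is independent of the chosen bound. If one wished to match the more explicit form of Theorem~\ref{morita.2.}, one could trace the composite equivalence and exhibit an explicit projective generator of $\W_{\textsf{Sub}(B\times B)}(\CC)$ whose endomorphism quantale is isomorphic to $\textsf{Sub}(B'\times B')$, namely the image of $\mP(B')=\Omega^{B'}$ under the equivalence $\CC(\mathcal E)\to \W_{\textsf{Sub}(B\times B)}(\CC)$ of Theorem~\ref{pitts.}; but this refinement is not needed for the statement as phrased.
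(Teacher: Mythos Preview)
Your proposal is correct and matches the paper's own proof, which simply says the result ``trivially follows from Theorem~\ref{pitts.}''. You have just spelled out the one-line argument: two applications of Theorem~\ref{pitts.} give module-category equivalences with the common intermediary $\CC(\mathcal E)$, and composing them yields the Morita equivalence by definition.
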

\begin{proof} Trivially follows from Theorem \ref{pitts.}
  \end{proof}

\begin{thm} \label{locales.}{\bf Sup-lattices in localic toposes.}  \em
Recall from \cite{JT} that a locale is a commutative idempotent quantale $L$ such that $l \leq 1$ for all $l \in L$.
For example, for any object $X$ of a Grothendieck topos $\E$, the set $\textsf{Sub}(X)$ is a locale (\cite[Proposition III. 8.1]{MclM})).
Since any locale $L$ is a partially ordered set, it can be seen as a category $\textsf{L}$ whose objects are the elements of $L$, and 
there is a morphism $l \to l'$ in $\textsf{L}$ if and only if $l \leq l'$. Define a Grothendieck topology $J$ on $\textsf{L}$ by 
assigning to each element  $l$ of $L$ (i.e. object of $\textsf{L}$) the collection $J_l=\{(l_i \leq l)_{i \in I}\}$  such that 
$\bigvee_{i \in I}l_i=l$. We shall write $\textsf{Sh}(\textsf{L},J)$ (or just $\textsf{Sh}(\textsf{L})$) for the category of sheaves 
on the site $(\textsf{L},J)$. Note that this topology is canonical in the sense that it is the largest Grothendieck topology on $\textsf{L}$ in which
all representable functors are sheaves. Since the locale $\textsf{Sub}(\textsf{T})$ of subsheaves of the terminal sheaf $\textsf{T}$ 
in $\textsf{Sh}(\textsf{L})$ is isomorphic to the locale $L$ (see, for example, the proof of \cite[Theorem IX. 5.1]{MclM}) and since 
$\textsf{T} \times \textsf{T}\simeq \textsf{T}$, applying Theorem \ref{pitts.}, we obtain the following result of Joyal and Tierney 
\cite[Chapter VI, Proposition 3.1]{JT}:
\end{thm}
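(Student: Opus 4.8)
The plan is to read the desired equivalence off Theorem \ref{pitts.}, taking the Grothendieck topos to be $\textsf{Sh}(\textsf{L})$ and the bound to be the terminal sheaf $\textsf{T}$. Since the lead-in discussion already records that $\textsf{T}\times\textsf{T}\simeq\textsf{T}$ and that $\textsf{Sub}(\textsf{T})$ is isomorphic to the locale $L$, essentially the only thing that must be checked before invoking Theorem \ref{pitts.} is that $\textsf{T}$ really is a bound of $\textsf{Sh}(\textsf{L})$, i.e.\ that its subobjects form a generating family. First I would note that $\textsf{Sh}(\textsf{L})$ is a localic topos: the canonical topology makes every sheaf a colimit of representables, and for $l\in L$ the representable $\textsf{L}(-,l)$ is precisely the subsheaf of $\textsf{T}$ corresponding to $l$ (with $\textsf{T}$ itself the representable at the top element). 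Hence the subobjects of $\textsf{T}$ generate $\textsf{Sh}(\textsf{L})$, which is exactly the statement that $\textsf{T}$ is a bound.

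Next I would turn the identification $\textsf{Sub}(\textsf{T}\times\textsf{T})\cong L$ into an isomorphism of \emph{quantales}, not merely of sup-lattices. As a poset $\textsf{Sub}(\textsf{T}\times\textsf{T})$ is $\textsf{Sub}(\textsf{T})$, which (as recalled just above) is the locale $L$; the point to verify is that the relational composition on $\textsf{Sub}(\textsf{T}\times\textsf{T})$ agrees with the quantale multiplication of $L$. Because $\textsf{T}$ is terminal, all the products occurring in the pullback–image recipe for composing relations $R,S\le\textsf{T}$ collapse to $\textsf{T}$, so that recipe reduces to the intersection $R\wedge S$; and the identity relation, namely the diagonal $\textsf{T}\to\textsf{T}\times\textsf{T}$, is the top element $\textsf{T}$. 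Thus the relational quantale $\textsf{Sub}(\textsf{T}\times\textsf{T})$ is $(L,\wedge,1)$, i.e.\ the locale $L$ viewed as a commutative idempotent quantale. Feeding $B=\textsf{T}$ into Theorem \ref{pitts.} then yields $\CC(\textsf{Sh}(\textsf{L}))\simeq\W_{\textsf{Sub}(\textsf{T}\times\textsf{T})}(\CC)\simeq\W_L(\CC)$, which is the Joyal--Tierney result.

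I do not expect a serious obstacle in proving the equivalence in this form: all the substantial work is already absorbed into Theorem \ref{pitts.}, and what remains is the bookkeeping that subterminals generate $\textsf{Sh}(\textsf{L})$ and that relational composition degenerates to meet on a terminal object. The one place where genuine care is needed, if one wants \cite[Chapter VI, Proposition 3.1]{JT} in its "internal locales" formulation, is to upgrade the equivalence of Theorem \ref{pitts.} to one compatible with tensor products, so that internal frame objects in $\CC(\textsf{Sh}(\textsf{L}))$ correspond to commutative idempotent $L$-algebras in $\W_L(\CC)$, i.e.\ to locales over $L$. For this I would use that the equivalence is $\CC$-enriched (Remark \ref{morita.r.}) together with an identification of the tensor of internal sup-lattices with the tensor of $L$-modules; but for the bare equivalence of categories this refinement is not required.
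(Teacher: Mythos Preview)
Your proposal is correct and follows exactly the paper's route: apply Theorem \ref{pitts.} with $\mathcal{E}=\textsf{Sh}(\textsf{L})$ and $B=\textsf{T}$, then use $\textsf{T}\times\textsf{T}\simeq\textsf{T}$ and $\textsf{Sub}(\textsf{T})\simeq L$ to identify the resulting quantale with $L$. You supply two details the paper leaves implicit---that $\textsf{T}$ is a bound (because representables are subterminal and generate) and that relational composition on $\textsf{Sub}(\textsf{T}\times\textsf{T})$ collapses to the meet---and your closing remark on the monoidal upgrade is a reasonable aside, but none of this departs from the paper's argument.
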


\begin{theorem}\label{JT} For any locale $L$, there is a natural
equivalence $$\CC(\emph{\textsf{Sh}}(\emph{\textsf{L}})) \simeq \emph{\W}_L(\CC).$$
\end{theorem}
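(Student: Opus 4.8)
The plan is to derive Theorem \ref{JT} as a direct specialization of Theorem \ref{pitts.}, so the only real work is to identify a suitable bound $B$ of the localic topos $\textsf{Sh}(\textsf{L})$ and to compute that the associated quantale $\textsf{Sub}(B \times B)$ is (isomorphic to) $L$. First I would take $B = \textsf{T}$, the terminal sheaf of $\textsf{Sh}(\textsf{L})$, and check that $\textsf{T}$ is a bound: its subobjects are precisely the subsheaves of $\textsf{T}$, and by the cited fact (from the proof of \cite[Theorem IX.5.1]{MclM}) the poset $\textsf{Sub}(\textsf{T})$ is isomorphic to $L$; since $(\textsf{L}, J)$ is a localic site with canonical topology, the representables generate $\textsf{Sh}(\textsf{L})$, and each representable is a subsheaf of $\textsf{T}$ (as every object of $\textsf{L}$ maps uniquely to the top element), so the subobjects of $\textsf{T}$ form a generating family. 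Hence $\textsf{T}$ is a bound.

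Next I would identify the quantale $\textsf{Sub}(\textsf{T} \times \textsf{T})$. Since the terminal sheaf is the unit for products, $\textsf{T} \times \textsf{T} \simeq \textsf{T}$, so $\textsf{Sub}(\textsf{T} \times \textsf{T}) \simeq \textsf{Sub}(\textsf{T}) \simeq L$ as sup-lattices. The content to verify is that this identification is one of \emph{quantales}, i.e.\ that relational composition on $\textsf{Sub}(\textsf{T} \times \textsf{T})$ transports to the meet (= multiplication of the idempotent commutative quantale $L$) and that the diagonal $\textsf{T} \to \textsf{T} \times \textsf{T}$ transports to the top element $1 \in L$. This is a short computation: relational composition of two subobjects of $\textsf{T} \times \textsf{T} \simeq \textsf{T}$ is computed by a pullback followed by image factorization, and since all the objects involved are subterminal, the pullback is just the intersection and the image factorization is trivial, so composition is $\wedge$; and the diagonal of a subterminal object is an isomorphism, corresponding to $\top = 1$. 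Thus $\textsf{Sub}(\textsf{T} \times \textsf{T}) \simeq L$ as quantales.

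With $B = \textsf{T}$ chosen and the quantale identified, Theorem \ref{pitts.} applied to the Grothendieck topos $\mathcal E = \textsf{Sh}(\textsf{L})$ yields an equivalence $\CC(\textsf{Sh}(\textsf{L})) \simeq \W_{\textsf{Sub}(\textsf{T} \times \textsf{T})}(\CC) \simeq \W_L(\CC)$, which is the claimed equivalence; naturality in $L$ follows from the naturality of all the constructions involved (the site $(\textsf{L}, J)$, the sheaf category, and the identifications above are functorial in the locale $L$).

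I expect the main obstacle to be not any deep argument but the bookkeeping needed to confirm that the various isomorphisms of sup-lattices respect the multiplicative (quantale) structure — in particular that relational composition on subobjects of the subterminal object $\textsf{T} \times \textsf{T}$ really does collapse to the meet operation of $L$. Once that is in hand, everything else is an invocation of Theorem \ref{pitts.} together with the standard fact $\textsf{Sub}(\textsf{T}) \simeq L$ for the localic topos.
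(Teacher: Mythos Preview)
Your proposal is correct and follows essentially the same route as the paper: take $B=\textsf{T}$, use $\textsf{T}\times\textsf{T}\simeq\textsf{T}$ and $\textsf{Sub}(\textsf{T})\simeq L$, and invoke Theorem~\ref{pitts.}. If anything, you are more careful than the paper, which does not spell out why $\textsf{T}$ is a bound nor why the sup-lattice isomorphism $\textsf{Sub}(\textsf{T}\times\textsf{T})\simeq L$ respects the quantale multiplication; your remarks on representables being subterminal and on relational composition of subterminals reducing to intersection fill exactly those gaps.
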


\end{document}